\newtheorem{theorem}{Theorem}[section]
\newtheorem{lemma}[theorem]{Lemma}
\newtheorem{corollary}[theorem]{Corollary}
\theoremstyle{definition}
\newtheorem{definition}[theorem]{Definition}
\theoremstyle{remark}
\newtheorem{remark}[theorem]{Remark}
\newenvironment{ack}{\noindent {\bf Acknowledgements}}{}{}%
\def\posets{\operatorname{Pos}}
\def\TT{\mathcal T}
\def\FF{\mathcal F}
\newcommand{\leqtope}[1]{\preccurlyeq_{#1}}
\newcommand{\pos}[2]{[#2, #1]}
\newcommand{\kk}{\ensuremath{\Bbbk}}
\newcommand{\Z}{\ensuremath{\mathbb Z}}
\newcommand{\C}{\ensuremath{\mathbb C}}
\newcommand{\R}{\ensuremath{\mathbb R}}
\newcommand{\RR}{\ensuremath{\mathcal R}}
\newcommand{\FFF}{\ensuremath{\mathsf M}}
\newcommand{\A}{\ensuremath{\mathscr A}}
\newcommand{\W}{\ensuremath{\underline{\mathsf W}}}
\newcommand{\QQ}{\ensuremath{\mathcal Q}}
\newcommand{\PP}{\ensuremath{\mathcal P}}
\renewcommand{\SS}{\ensuremath{\mathcal S}}
\newcommand{\uF}{\ensuremath{\underline{\FFF}}}
\newcommand{\DD}{\ensuremath{\mathcal D}}
\newcommand{\CC}{\ensuremath{\mathscr C}}
\newcommand{\supp}{\operatorname{supp}}
\begin{document}
\title[Equivariant model]{An equivariant discrete model for complexified arrangement complements} %
\author{Emanuele Delucchi and Michael J. Falk}
\address{Fachbereich Mathematik und Informatik, Universit\"at Bremen,
  Bibliothekstra\ss{}e 1, 20135 Bremen, Germany.}
\address{Department of Mathematics and Statistics,
Northern Arizona University,
Flagstaff, AZ 86011-5717, USA.}
\maketitle


\begin{abstract}
We define a partial ordering on the set $\QQ=\QQ(\FFF)$ of pairs of topes of an oriented matroid \FFF, and show the geometric realization $|\QQ|$ of the order complex of \QQ\  has the same homotopy type as the Salvetti complex of \FFF. For any element $e$ of the ground set, the complex $|\QQ_e|$ associated to the rank-one oriented matroid on $\{e\}$ has the homotopy type of the circle. There is a natural free simplicial action of $\Z_4$ on $|\QQ|$, with orbit space isomorphic to the order complex of the poset $\QQ(\FFF,e)$ associated to the pointed (or affine) oriented matroid $(\FFF,e)$. If \FFF\ is the oriented matroid of an arrangement \A\ of linear hyperplanes in $\R^n$, the $\Z_4$ action corresponds to the diagonal action of $\C^*$ on the complement $M$ of the complexification of \A: $|\QQ|$ is equivariantly homotopy-equivalent to $M$ under the identification of $\Z_4$ with $\{\pm1, \pm i\}$, and $|\QQ(\FFF,e)|$ is homotopy-equivalent to the complement of the decone of \A\ relative to the hyperplane corresponding to $e$. All constructions and arguments are carried out at the level of the underlying posets. If a group $G$ acts on the set of topes of \FFF\ preserving adjacency, then $G$ acts simplicially on $|\QQ|$.

We also show that the class of fundamental groups of such complexes is strictly larger than the class of fundamental groups of complements of complex hyperplane arrangements. Specifically, the group of the non-Pappus arrangement is not isomorphic to any realizable arrangement group.
\end{abstract}

\section{Introduction}
An arrangement of hyperplanes is a set $\mathscr A =
\{H_1,\ldots,H_n\}$ of linear or affine codimension $1$ subspaces of $\mathbb
C^d$. An arrangement is {\em complexified} if each $H_i$ has a defining equation with real coefficients; in this case the underlying real arrangement $\{H_1\cap \R^d, \ldots, H_n \cap \R^d \}$ is denoted $\A_\R$.
A main topic in the theory of hyperplane arrangements is the study of
combinatorial invariants of the topology of the complement $M(\mathscr
A):=\mathbb C^d \setminus \bigcup \mathscr A$.

The arrangement $\mathscr A $ is called {\em central} if all its 
hyperplanes contain the origin; in this case, $M(\mathscr A)$ carries the
natural (diagonal) $\mathbb C^*$-action. One of the many consequences
of this fact is the following topological property.
Fix an element $H_0\in
\mathscr A$ and let $H_0'$ be a parallel translate of $H_0$ that does not
contain the origin. Let $d\mathscr A$ be the {\em decone} of $\mathscr
A$ relative to $H_0$, the arrangement $\{H\cap H_0' \mid H \in \mathscr
A\setminus H_0\}$ in $H_0'\cong \mathbb C^{d-1}$. Then there is a
diffeomorphism
$$M(\mathscr A) \cong \mathbb C^*\times M(d\mathscr A).$$

There exist combinatorially defined complexes that model the homotopy
type of $M(\mathscr A)$, e.g.,  by work of Salvetti
\cite{Salvetti} in the complexified case, and Bj\"orner and Ziegler
\cite{BZ} in the general case. These complexes are finite, therefore
cannot model the circle action of $S^1\subset \mathbb C^*$ on
$M(\mathscr A)$. 

In principle, there are two ways out of this
situation: either to develop `continuous' combinatorial models that can carry
a circle action, or to let a `discretized' $S^1$ act on the known
combinatorial models. A continuous approach has been
attempted, e.g.\ in \cite{AnDe}, and is as yet not fully developed. Here
we explore the second possibility, also in view of the fact that the
simplicial complexes mentioned above are defined in the
general setting of {\em pseudosphere arrangements}, where no 
original linear space with $\mathbb C^*$ action exists.

The known discrete complexes depend only on the combinatorics of arrangements of real codimension-one pseudo-spheres in $S^{d-1}$, encoded by the associated oriented matroid or $2$-matroid,  respectively, and are defined as the {\em order complexes} of certain partially-ordered sets, or posets. The order complex of a poset \PP\ is the abstract simplicial complex $\Delta(\PP)$ whose simplices are the linearly-ordered subsets, or {\em chains}, of \PP.  Order-preserving and order-reversing maps of posets induce simplicial maps of order complexes. The geometric realization of $\Delta(\PP)$ is denoted $|\PP|$, and is called the {\em geometric realization} of \PP\ (see Remark~\ref{rem:vary}). 

Here we treat only complexified arrangements, in the general setting
of oriented matroids. Associated to a loop-free oriented matroid \FFF,
one has the {\em Salvetti poset} $\SS=\SS(\FFF)$ whose geometric
realization $|\SS|$ has the homotopy type of $M$ in case \FFF\ is
realized by the real arrangement $\A_\R$ while in general, by a result of Deshpande
\cite{PD}, $|\SS|$ has the
homotopy type of the tangent bundle complement of the 
arrangement of pseuduspheres associated to $\FFF$ (see Definition \ref{def:pseudoarr}). If $e_0$ is a fixed element of the ground set of \FFF\ (corresponding to $H_0\in\A$) one has the pointed (or affine) oriented matroid $(\FFF,e_0)$, and an associated subposet $d\SS=\SS(\FFF,e_0)$ of $\SS$, with $|d\SS|$ homotopy equivalent to the complement of the decone $d\A$ of \A\ relative to $H_0$.

In this paper, after a preparatory section on the basics of poset topology, we
\begin{itemize}
\item define posets $\QQ=\QQ(\FFF)$ and $d\QQ=\QQ(\FFF,e_0) \subseteq \QQ$ and an order-preserving map $\SS \to \QQ$ inducing homotopy equivalences $|\SS| \simeq |\QQ|$ and $|d\SS| \simeq |d\QQ|$;
\item define a natural free action of $\Z_4$ on \QQ\ by order-reversing and -preserving isomorphisms;
\item define an equivariant order-preserving map $\QQ_{e_0} \times d\QQ \to \QQ$, where $\QQ_{e_0}$ is the poset associated with $\FFF|_{\{e_0\}}$ and $\Z_4$ acts trivially on $d\QQ$, inducing a homotopy equivalence $|\QQ_{e_0}| \times |d\QQ| \simeq |\QQ|$. Then $|\SS_{e_0}| \times |d\SS|\simeq |\SS|$ as well.
\end{itemize}

Thus we obtain a combinatorial version of the cone-decone property of complexified hyperplane arrangements, which holds in the ostensibly more general setting of oriented matroids, realizable or not. As a corollary we obtain the main result of \cite{cordovil}, a product decomposition $\pi_1(|\SS|) \cong \Z \times \pi_1(|d\SS|)$ of fundamental groups, originally proved via complicated manipulation of group presentations. Our work also partly answers a question of Ziegler \cite[Problem 7.7]{Z}.

Finally we show that this setting is indeed more general, by displaying an oriented matroid \FFF, an orientation of the non-Pappus matroid, for which $\pi_1(\vert \SS \vert)$ is not isomorphic to the fundamental group of the complement of any complex hyperplane arrangement. To our knowledge no such example has appeared in the literature. The argument uses properties of degree-one resonance varieties.

\section{Poset topology}

\begin{definition}
  A partially ordered set (or {\em poset}) is a pair $(\PP,\leq)$ where
  $\PP$ is a set and $\leq$ a partial order relation on $\PP$. A morphism of
  posets $(\PP,\leq_\PP)\to (\QQ,\leq_\QQ)$ is an order-preserving function
  $f:\PP\to \QQ$, i.e., one for which $f(p_1)\leq_\QQ f(p_2)$ whenever
  $p_1\leq_\PP p_2$; it is an isomorphism if $f$
  is bijective, and in this case we will write $(\PP,\leq_\PP)\cong(\QQ\leq_\QQ)$. We will write $\posets$ for the category of posets
  and order-preserving functions. A {\em chain} in the poset $(\PP,\leq)$ is a subset of $\PP$ that is
  totally ordered by $\leq_\PP$.  The {\em product} of two posets $(\PP,\leq_\PP)$ and $(\QQ,\leq_\QQ)$ is
  $(\PP\times \QQ, \leq_{\PP\times \QQ})$, where $(p_1,q_1)\leq_{(\PP\times
    \QQ)}(p_2,q_2)$ if and only if $p_1\leq_\PP p_2$ and $q_1\leq_\QQ q_2$.

  The {\em opposite} or `order dual' of a given poset $(\PP,\leq_\PP)$
  is the poset $(\PP,\leq_\PP)^{op}=(\PP,\leq_\PP^{op})$ where $p_1\leq_\PP^{op} p_2$ if and
  only if
  $p_1\geq_{\PP} p_2$.
\end{definition}

\begin{remark}[Notation]
  It is customary to denote a poset $(\PP,\leq)$ by its underlying set
  $\PP$ when the order relation is understood. 
\end{remark}



 Let $\PP$ be a poset. Let $(\Delta(\PP),\leq)$ be the poset of chains in \PP, with $\sigma \leq \tau$ if and only if $\sigma \subseteq \tau$. $\Delta(\PP)$ is an abstract simplicial complex with vertex set $\PP$, called the {\em 
order complex} of $\PP$. The standard geometric realization of $\Delta(\PP)$ will be denoted by $\vert \PP \vert$, and called the {\em geometric realization} of \PP. We refer to \cite{Kozlov} as a general reference for poset topology.

\begin{remark}\label{rem:vary} The terminology leads to no conflict: if \PP\ is a simplicial complex, there is a simplicial homeomorphism of $\vert \Delta(\PP)\vert$ to the barycentric subdivision of $\vert \PP \vert$. See also Remark~\ref{rem:can} below.  
\end{remark}


As is customary, we refer to the
  homotopy type of $\vert \PP \vert$ when speaking of ``the
  homotopy type of the poset $\PP$." In particular, we will say that
  posets $\PP$ and $\QQ$ are homotopy equivalent  
  (written $\PP\simeq \QQ$) if $\vert \PP \vert$ and $\vert \QQ \vert$ are. 

\begin{remark}\label{rem:op} $\,$
  \begin{itemize}
  \item[(a)]   For every poset $\PP$ we have $\Delta(\PP)=\Delta(\PP^{op})$.
   \item[(b)] If $\PP$ and $\QQ$ are posets, then $\vert \PP \times \QQ\vert$ is homeomorphic to
    $\vert \PP\vert\times \vert \QQ \vert$. (In fact $\Delta(\PP \times \QQ)$ is a triangulation of $|\PP| \times |\QQ|$.) See \cite[Theorem 10.21]{Kozlov} for a generalization.
  \end{itemize}
\end{remark}
\noindent
The following ``Quillen Lemma" is used in most applications of the theory - see \cite[Thm. 15.28]{Kozlov}.
\begin{lemma}[See \cite{Q}] \label{lem:quillen}
  Let $f:\PP\to \QQ$ be a poset map. If $f^{-1}(\QQ_{\geq q})$ is
  contractible for all $q\in \QQ$, then $\PP\simeq \QQ$. 
\end{lemma}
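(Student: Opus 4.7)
The strategy, going back to Quillen, is to factor the comparison through a ``mapping cylinder'' poset $M(f)$ that interpolates between $\PP$ and $\QQ$. Put $M(f) = \PP \sqcup \QQ$ as a set, keep $\leq_\PP$ and $\leq_\QQ$ on the respective pieces, and declare $q \leq p$ (for $q \in \QQ$, $p \in \PP$) exactly when $q \leq_\QQ f(p)$; transitivity follows from the order-preservation of $f$. The natural inclusions $\iota_\PP \colon \PP \hookrightarrow M(f)$ and $\iota_\QQ \colon \QQ \hookrightarrow M(f)$ are what we will compare.

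The inclusion $\iota_\QQ$ admits the retraction $r \colon M(f) \to \QQ$ given by $r(q) = q$ and $r(p) = f(p)$; this is a poset map, $r \iota_\QQ = \mathrm{id}_\QQ$, and $\iota_\QQ r \leq \mathrm{id}_{M(f)}$ pointwise because $f(p) \leq p$ in $M(f)$ by construction. By the standard Order Homotopy Lemma --- pointwise comparable poset maps induce simplicially homotopic maps on order complexes --- this already forces $|\iota_\QQ|$ to be a homotopy equivalence, without invoking the hypothesis.

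The hypothesis enters in showing $|\iota_\PP|$ is a homotopy equivalence. Assume first that $\QQ$ is finite, fix a linear extension $q_1, \ldots, q_n$ of $\QQ$, and filter $M(f)$ by $M_0 = \PP$ and $M_k = \PP \cup \{q_1, \ldots, q_k\}$. By construction no $\PP$-element sits below any $\QQ$-element in $M(f)$, and the linear extension guarantees that among $q_1, \ldots, q_k$ only $q_j$ with $j < k$ can lie strictly below $q_k$, and none above it. Hence $M_{k, <q_k} = \QQ_{<q_k}$ and $M_{k, >q_k} = f^{-1}(\QQ_{\geq q_k})$, so in $\Delta(M_k)$ the link of the vertex $q_k$ decomposes as the simplicial join $\Delta(\QQ_{<q_k}) \ast \Delta(f^{-1}(\QQ_{\geq q_k}))$. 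The second factor is contractible by hypothesis, and a join with a contractible space is contractible. Attaching $q_k$ to $M_{k-1}$ therefore glues the cone $|\mathrm{star}(q_k)|$ onto $|M_{k-1}|$ along a contractible link, which is a homotopy equivalence by the standard argument for attaching a cone along a contractible cofibration. Iterating yields $|\PP| = |M_0| \simeq |M_n| = |M(f)|$, which combined with the previous paragraph gives $|\PP| \simeq |\QQ|$. The infinite case follows by a direct-limit argument on finite subposets of $\QQ$, since any compact set in $|\PP|$ or $|\QQ|$ lies in a finite subcomplex.

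The main technical point is the link identification in the inductive step: one must keep honest account of the cross-relations in $M(f)$ and verify that the linear extension leaves no ``rogue'' comparabilities between $q_k$ and either the $\PP$-elements on the wrong side or the yet-unattached $\QQ$-elements above it, so that the link really splits as the displayed join with $\Delta(f^{-1}(\QQ_{\geq q_k}))$ as a factor. Once this combinatorial bookkeeping is clean, the topology reduces to the general fact that attaching a cone along a contractible subspace is a homotopy equivalence, and the hypothesis plays exactly the role of ensuring contractibility of that subspace at every stage.
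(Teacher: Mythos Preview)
The paper does not prove this lemma; it merely records it with citations to Quillen and to \cite[Thm.~15.28]{Kozlov}. So there is no in-paper argument to compare against, and your task reduces to whether your proof stands on its own.

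It does. The mapping-cylinder poset $M(f)$ with the cross-relation $q\leq p \Leftrightarrow q\leq_\QQ f(p)$ is exactly the right intermediary; the retraction $r$ and the pointwise inequality $\iota_\QQ r \leq \mathrm{id}_{M(f)}$ give $|\QQ|\simeq |M(f)|$ by the order-homotopy lemma, and your filtration $M_0\subset M_1\subset\cdots$ by a linear extension of $\QQ$ is correct. The key combinatorial check---that in $M_k$ one has $(M_k)_{<q_k}=\QQ_{<q_k}$ and $(M_k)_{>q_k}=f^{-1}(\QQ_{\geq q_k})$---is right, for the reasons you state: no $\PP$-element lies below any $\QQ$-element in $M(f)$, and the linear extension forces all of $\QQ_{<q_k}$ to already be present while excluding $\QQ_{>q_k}$. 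The link therefore splits as the indicated join, which is contractible by hypothesis, and the cone-attachment along a contractible cofibration is a homotopy equivalence.

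The only soft spot is the infinite case, where ``a direct-limit argument on finite subposets of $\QQ$'' is a gesture rather than a proof: the hypothesis involves $f^{-1}(\QQ_{\geq q})$ for the full $\QQ$, not a finite truncation, so one has to be a bit careful about which finite subposets to use and why the equivalences pass to the colimit. A cleaner fix is to run your same attachment argument along a transfinite linear extension, using that at limit stages the order complex is the colimit of a sequence of closed cofibrations. For this paper the point is moot, since every poset in sight is finite.
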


\begin{remark}\label{rem:replace}
  The condition of Lemma \ref{lem:quillen} can be replaced by ``$f^{-1}(\QQ_{\leq q})$ is
  contractible for all $q\in \QQ$'' via Remark \ref{rem:op}.
\end{remark}

\begin{definition}
  An order-preserving function $f:\PP\to \PP$ is {\em monotone} if either
  $f(p)\geq p$ for all $p\in \PP$ or $f(p) \leq p$ for all $p\in \PP$.
\end{definition}
\def\im{\operatorname{im}}
\def\fix{\operatorname{\sf fix}}

\begin{lemma}[See Theorem 13.22(b) in \cite{Kozlov}]\label{rem:monotone}
 Let $f:\PP\to \PP$ be a monotone poset map. Then $ \PP \simeq \fix (f)$.  
\end{lemma}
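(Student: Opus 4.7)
The plan is to apply Quillen's Lemma (Lemma~\ref{lem:quillen}) to the inclusion $\iota: \fix(f) \hookrightarrow \PP$. By Remark~\ref{rem:op} I may assume without loss of generality that $f(p) \geq p$ for all $p \in \PP$, since the decreasing case is recovered by passing to $\PP^{op}$, which has the same order complex and the same fixed-point set.

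Assuming $\PP$ is finite (the ambient setting throughout the paper), the ascending chain $p \leq f(p) \leq f^2(p) \leq \cdots$ stabilizes for each $p \in \PP$; let $r(p)$ denote its eventual value, necessarily an element of $\fix(f)$. The key step is to show that for every $q \in \PP$ the fiber
\[
\iota^{-1}\bigl(\PP_{\geq q}\bigr) \;=\; \fix(f) \cap \PP_{\geq q}
\]
admits $r(q)$ as a minimum element. Membership is immediate: $r(q) \geq q$ by monotonicity of $f$, and $r(q) \in \fix(f)$ by construction. Conversely, for any $p \in \fix(f)$ with $p \geq q$, iterating the order-preserving map $f$ gives $p = f^N(p) \geq f^N(q) = r(q)$, where $N$ is chosen large enough that both sequences stabilize. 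Since the order complex of a poset with a minimum is a simplicial cone on the complex of the remaining elements, each such fiber is contractible, and Lemma~\ref{lem:quillen} delivers $\fix(f) \simeq \PP$.

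The main potential obstacle is less conceptual than notational: one must keep track of the two separate reductions (opposite poset and iteration of $f$) and verify that $r$ is well-defined and order-preserving, but both facts follow at once from finiteness and the order-preservation of $f$. Once Quillen's Lemma is in hand together with the trivial contractibility of a poset with a minimum, no further machinery is required, and the proof closely mirrors the argument of \cite[Thm.~13.22(b)]{Kozlov}.
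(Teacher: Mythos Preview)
The paper does not supply a proof of this lemma; it is stated with a bare citation to \cite[Thm.~13.22(b)]{Kozlov} and nothing more. Your argument via Quillen's Lemma applied to the inclusion $\iota\colon \fix(f)\hookrightarrow \PP$ is correct in the finite setting, and the verification that $r(q)$ is the minimum of $\fix(f)\cap \PP_{\geq q}$ is clean.

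For context, the usual textbook route (and the one Kozlov takes) is slightly different: one first proves the ``order homotopy lemma'' that any two comparable poset maps induce homotopic maps on realizations, so $|f|\simeq |\mathrm{id}|$; then for finite $\PP$ one iterates to $f^N$, which is an idempotent monotone map (a closure operator) with $\fix(f^N)=\fix(f)$, and $|f^N|$ exhibits $|\fix(f)|$ as a deformation retract of $|\PP|$. Your approach trades the order-homotopy step for the Quillen fiber criterion already established in the paper (Lemma~\ref{lem:quillen} and Remark~\ref{remCone}), which is a perfectly good and arguably more self-contained choice here. Note that the finiteness hypothesis is genuinely needed in either argument: for $\PP=\mathbb{N}$ and $f(n)=n+1$ one has $\fix(f)=\emptyset$ while $|\PP|$ is contractible.
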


\begin{remark}\label{remCone}
  If a poset $\PP$ has a unique maximal element $p$, then $\PP$ is contractible
  because its order complex is the cone over the order complex of $\PP\setminus \{p\}$.
\end{remark}

\begin{remark}\label{rem:can} For every poset $\PP$, there is a canonical homotopy equivalence $\Delta(\PP)\simeq \PP$ 
 (e.g., by the function 
 $\Delta(\PP)\to \PP$, $\omega \mapsto \min \omega $). 
\end{remark}

\begin{definition}
  If $\PP$ has a unique maximal element (say, $x$), we define
  $\Delta^\dag(\PP)$ to be the subposet of $\Delta(\PP)$ consisting of
  all chains in $\PP$ that contain $x$. 
  If $\PP$ has a unique maximal element as well as a unique minimal element,
  then $\Delta^{\dag\dag}(\PP)$ is the poset of chains containing both.
\end{definition}

\begin{lemma}\label{lem:delta} Let $\PP$ be a poset with a unique maximal element. Then
$\Delta^\dag(\PP) \simeq \Delta(\PP)$. Moreover, if $\PP$ has a unique
minimal element as well, $\Delta^{\dag\dag}(\PP) \simeq \Delta \PP$  
\end{lemma}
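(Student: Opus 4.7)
The plan is to deduce both statements directly from Lemma \ref{rem:monotone} (monotone maps deformation-retract onto their fixed-point subposet), by constructing a monotone self-map of $\Delta(\PP)$ whose fixed-point set is precisely $\Delta^\dag(\PP)$, and then repeating the argument one level down for $\Delta^{\dag\dag}(\PP)$.

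Concretely, let $x$ denote the unique maximal element of $\PP$. I would define
$$f\colon \Delta(\PP) \longrightarrow \Delta(\PP), \qquad f(\omega) = \omega \cup \{x\}.$$
Since $x$ is comparable with (indeed, $\geq$) every element of $\PP$, the set $\omega \cup \{x\}$ is again a chain, so $f$ is well-defined; clearly $\omega \subseteq \omega'$ implies $\omega\cup\{x\}\subseteq \omega'\cup\{x\}$, so $f$ is order-preserving. Moreover $\omega \subseteq f(\omega)$, so $f(\omega) \geq \omega$ in $\Delta(\PP)$, which means $f$ is monotone. The fixed-point subposet is
$$\fix(f) = \{\omega \in \Delta(\PP) \mid x \in \omega\} = \Delta^\dag(\PP),$$
so Lemma \ref{rem:monotone} yields $\Delta(\PP) \simeq \Delta^\dag(\PP)$, which is the first claim.

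For the second claim, assume additionally that $\PP$ has a unique minimum $y$. I would repeat exactly the same argument, this time with the monotone map
$$g\colon \Delta^\dag(\PP) \longrightarrow \Delta^\dag(\PP), \qquad g(\omega) = \omega \cup \{y\},$$
which is well-defined because $y$ is comparable with everything in $\PP$ and because adjoining $y$ preserves the property of containing $x$. Again $g(\omega) \supseteq \omega$, so $g$ is monotone, and $\fix(g) = \Delta^{\dag\dag}(\PP)$. Applying Lemma \ref{rem:monotone} gives $\Delta^\dag(\PP) \simeq \Delta^{\dag\dag}(\PP)$, and chaining the two equivalences yields $\Delta^{\dag\dag}(\PP) \simeq \Delta(\PP)$.

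There is no real obstacle: the only mild point to check is the well-definedness of $f$ and $g$, which uses essentially that $x$ (resp.\ $y$) is comparable to every element, a feature that fails without the uniqueness of the extremum. An alternative, equivalent route would be to use Remark \ref{remCone} on the upper (or lower) fibers of the inclusion $\Delta^\dag(\PP) \hookrightarrow \Delta(\PP)$ together with the Quillen Lemma \ref{lem:quillen}, but the monotone map argument is more economical and avoids checking contractibility fiber-by-fiber.
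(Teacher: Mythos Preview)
Your proof is correct and follows exactly the paper's approach: the monotone self-map $\omega \mapsto \omega \cup \{x\}$ on $\Delta(\PP)$ with fixed-point set $\Delta^\dag(\PP)$, followed by the analogous map adjoining the minimum for the second claim. You have simply supplied the details that the paper leaves implicit.
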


\begin{proof} Let $p$ be the unique maximal element of $\PP$.
  The assignment $\omega \mapsto \omega \cup \{p\}$ 
defines a monotone poset map $\Delta(\PP)\to \Delta(\PP)$ fixing
$\Delta^\dag(\PP)$. 

The proof for $\Delta^{\dag\dag}(\PP)$ goes similarly.
\end{proof}

\section{Discrete circle action on complexified arrangements}

For the remaining of this paper fix a rank $r$ oriented matroid on  finite ground
set $E$ and let $\mathcal F$ be its set of covectors. For an
introduction to the theory of oriented matroids see  \cite{BLSWZ}:
here we recall only what is needed in the following.

\begin{definition}\cite[Definition  5.1.3]{BLSWZ}\label{def:pseudoarr}
  A {\em rank-$r$ arrangement of pseudospheres} is a set $\mathscr A
  =\{S_e\}_{e\in E}$
  of centrally symmetric PL-homeomorphic embeddings of $S^{r-2}$ in
  $S^{r-1}$ such that, for all $\mathscr B\subset \mathscr A$,
  $\bigcap \mathscr B$ is a PL-sphere, together with a choice of a connected component $S_e^+$ of
  $S^r\setminus S_e$ for every $e\in E$.

  The set of {\em real signs} is $\{+,0,-\}$, and the map
  $$
  \sigma: S^{r-1} \to \{+,0,-\}^E; \quad \sigma_{\mathscr A}(x)_e:=
  \left\{\begin{array}{ll}
      + & \textrm{if }x\in S_e^+ \\
      0 & \textrm{if }x\in S_e \\
      - & \textrm{else. }
  \end{array}\right.
  $$
  associates a {\em sign vector} to every point of the sphere. Notice
  that the zero vector $\hat{0}:=(0,\ldots,0)$ is not in the image of
  $\sigma_{\mathscr A}$.

  The set of covectors of a {\em rank-$r$  oriented matroid} on the
  ground set $E$ is any
  subset $\FF\subseteq \{+,0,-\}^E$ of the form $\FF=\im(\sigma_{\mathscr
    A})\cup \{\hat{0}\}$ for some rank $r$ arrangement of pseudospheres $\mathscr A$.
\end{definition}

\begin{remark} 
  If we partially order the set of signs $\{+,0,-\}$ by $0<+$, $0<-$
  and $+$ incomparable to $-$,
  the set $\FF$
  inherits a partial order
  $\leq_{\FF}$ as a subset of the
  product poset $\{+,0,-\}^E$. 
  With this partial ordering, $\FF$ has a unique minimal element
 $\hat{0}$ and a set $\TT$ of maximal elements, called {\em topes}.

 Notice that, on $\FF\setminus \{\hat{0}\}$, the ordering $\leq_{\mathcal F}$ coincides with the
 incidence relation of closed cells of the stratification of
 $S^{r-1}$. 
\end{remark}


\begin{figure}[h]
  \begin{tikzpicture}
    \foreach \a in {0,60,120} \draw (\a:2) -- (\a:-2);
    \node (A) at (270:1) {$A$}; \node (B) at (210:1) {$B$}; \node (C)
    at (330:1) {$C$}; \node (D) at (150:1) {$D$}; \node (E) at (30:1)
    {$E$}; \node (F) at (90:1) {$F$};
  \end{tikzpicture}
  \begin{tikzpicture}[scale=0.5]
    \node (A1) at (0,4.5) {$A$};
    \node (B1) at (2,4.5) {$B$};
    \node (D1) at (4,4.5) {$D$};
    \node (F1) at (6,4.5) {$F$};
    \node (E1) at (8,4.5) {$E$};
    \node (C1) at (10,4.5) {$C$};
    \coordinate (A) at (0,4);
    \coordinate (B) at (2,4);
    \coordinate (D) at (4,4);
    \coordinate (F) at (6,4);
    \coordinate (E) at (8,4);
    \coordinate (C) at (10,4);
    \coordinate (a) at (0,2);
    \coordinate (b) at (2,2);
    \coordinate (c) at (4,2);
    \coordinate (d) at (6,2);
    \coordinate (e) at (8,2);
    \coordinate (f) at (10,2);
    \foreach \p in {0,2,...,10} 
        \draw[fill] (\p,2) circle [radius=0.1];
    \foreach \p in {0,2,...,10} 
        \draw (\p,2) -- (5,0);        
    \draw (A) -- (a) -- (B) -- (b) -- (D) -- (c) -- (F) -- (d) -- (E) --
    (e) -- (C) -- (f) -- (A); 
    \coordinate (g) at (5,0);
    \draw [fill] (5,0) circle [radius=0.1];
  \end{tikzpicture}
  \caption{An arrangement of three lines in the real plane, and its
  poset $\FF$ of faces.}
  \label{fig:11}
\end{figure}
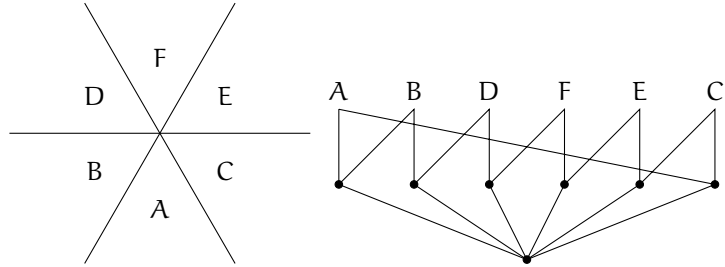

\begin{remark}
  An oriented matroid \FFF\ is uniquely determined by its covectors,
  and also by several other equivalent combinatorial systems, e.g.,
  vectors, basis signatures, or the set of topes. The oriented matroid
  \FFF\ is considered to consist of any and all of these notions - see
  \cite{BLSWZ}. Notice that knowledge of the adjacency relation among
  topes (i.e. the `tope graph' \cite[Definition 4.2.1]{BLSWZ}) is
  enough to reconstruct the oriented matroid up to a reorientation
  (i.e. a global change of sign in a component of the vectors).
\end{remark}


\begin{definition}[Composition of sign vectors]
   Given two sign vectors $X,Y\in \{+,0,-\}^E$ define a sign
   vector $X\circ Y$ as
   $$
   (X\circ Y )_e =
   \left\{\begin{array}{ll} Y_e & \textrm{if } X_e =0 \\
       X_e & \textrm{else.} \end{array}\right.
   $$
\end{definition}

\begin{remark}
     If $X$ and $Y$ are covectors of an oriented matroid (and thus
     correspond to cells on the sphere), then $X\circ
     Y$ correspond to the cell obtained by `moving slightly off $X$ towards $Y$'.
\end{remark}

We can now introduce the topological object on which we'll focus.

\begin{definition}
  The {\em Salvetti poset} of of the given oriented matroid is the set $$\SS=\{(F,C) \in \FF\times\TT\mid
F\circ C = C \}$$ ordered by $(F,C)\leq (F',C')$ if $F\geq F'$ and
$F\circ C' = C$.
\end{definition}

\begin{remark}[Arrangements of hyperplanes]
  In the particular case where the arrangement $\mathscr A$ of Definition
  \ref{def:pseudoarr} is  induced by the intersection of linear hyperplanes with
  the unit sphere,  Salvetti proved \cite{Salvetti} that
  $\vert \SS \vert$ can be embedded as a deformation retract into the complement of the
  complexification of the hyperplanes.
\label{rem:arr}
\end{remark}

\begin{definition}[Definition 4.2.9 of \cite{BLSWZ}]
  Let $M$ be a given oriented matroid with set $\FF$ of covectors and
  set $\TT$ of topes.  Given $B\in \TT$ let $\TT_B$ denote the poset
  of all topes ordered by
$$
T\leqtope{B} R \Leftrightarrow S(B,T)\subseteq S(B,R)
$$
 where the {\em separating set} $S(X,Y)$ of two sign vectors $X,Y\in
 \{+,0,-\}^E$ is defined as $S(X,Y):=\{e\in E \mid X_e=-Y_e\neq 0\}$.
\end{definition}

\begin{remark}\label{rem:intervals}
  The interval determined by $R \leqtope{B}T$ in $\TT$ will be denoted
  $\pos{R}{T}$. Note that it does not depend on $B$, as long as
  $S(T,R)\cap S(B,R) =\emptyset$.
\end{remark}

For the purposes of what follows we need to replace
$\vert \SS \vert$ with another, homotopy equivalent simplicial complex.

\begin{definition}
Let  $\QQ:=(\TT\times\TT,\leq)$ be the poset given on the set $\TT\times\TT$ by the order relation
$$
(T,R) \leq (T',R') :\Leftrightarrow T\leqtope{T'} R \leqtope{T'} R' 
$$
\end{definition}

We show that $\leq$ is transitive, and leave reflexivity and anti-symmetry to the reader.
Let $(T,R) \leq (T',R')$ and $(T',R') \leq
  (T'',R'')$. Then by definition (a) $T\leqtope{T'} R \leqtope{T'} R'$ and
  (b) $T'\leqtope{T''} R' \leqtope{T''} R''$. From (b) follows in
  particular $T'\leqtope{T''}R'$, and this interval has, by Remark
  \ref{rem:intervals}, the same structure in $\TT_{T''}$ as in
  $\TT_{T'}$. Therefore, from (a) we deduce
  $T'\leqtope{T''}T\leqtope{T''}R\leqtope{T''}R'$. With (b), this
  implies $T\leqtope{T''} R \leqtope{T''} R''$, meaning
  $(T,R)\leq(T'',R'')$, as required.

\begin{remark}\label{rem:tope}
The poset \QQ\ can be described in terms of the {\em tope graph}  of \FFF\ \cite[Definition  4.2.1]{BLSWZ}: $(T,R) \leq (T',R')$ if and only if some geodesic from $T$ to $R$ can be extended to a geodesic from $T'$ to $R'$.
\end{remark}

\begin{lemma}\label{lem:homeq}
  The function
$$
\SS \to \QQ; \quad (F,C) \mapsto (C, F\circ (-C)) 
$$
is a poset morphism and induces a homotopy equivalence
$\vert\SS \vert \simeq \vert\QQ \vert$.
\end{lemma}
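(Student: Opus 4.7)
The plan is to apply Quillen's Fiber Lemma (Lemma~\ref{lem:quillen}) in the order-dual form of Remark~\ref{rem:replace}.

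For well-definedness of $\phi$: given $(F, C) \in \SS$, the equation $F \circ C = C$ forces $F_e \in \{0, C_e\}$ for every $e$, so $(F \circ (-C))_e$ is either $C_e$ or $-C_e$ and never vanishes; hence $F \circ (-C) \in \TT$ and $\phi(F, C) \in \QQ$. For order-preservation: if $(F, C) \leq_{\SS} (F', C')$, i.e., $F \geq F'$ in covector order and $F \circ C' = C$, a coordinate-by-coordinate check of sign vectors gives the separating-set chain
\[
S(C', C) \subseteq S(C', F \circ (-C)) \subseteq S(C', F' \circ (-C')),
\]
which is exactly the relation $\phi(F, C) \leq_{\QQ} \phi(F', C')$; the first inclusion uses the equation $F \circ C' = C$, the second uses the inclusion $\{e : F_e = 0\} \subseteq \{e : F'_e = 0\}$ coming from $F \geq F'$.

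For the homotopy equivalence, fix $q = (T, R) \in \QQ$; it suffices to show that $\vert \phi^{-1}(\QQ_{\leq q}) \vert$ is contractible. By the tope-graph description of $\QQ$ (Remark~\ref{rem:tope}), this preimage consists of those $(F, C) \in \SS$ with $S(T, C) \subseteq S(T, K) \subseteq S(T, R)$, where $K := F \circ (-C)$; equivalently, the geodesic $C \to K$ is a subpath of some geodesic $T \to R$ in the tope graph. The preimage is covered by the principal down-ideals (in the $\SS$-order) of its maximal elements $\{M_\alpha\}$; each such ideal is a cone and hence contractible. Any nonempty intersection of these ideals has a unique maximum---obtained by composing the minimal covectors associated to the $M_\alpha$'s, which yields a covector because the minimal covectors are mutually conformal (all agreeing with $T$ on $S(T, R)^c$)---and is likewise contractible. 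By the Nerve Theorem the preimage is homotopy equivalent to the nerve of this cover, which one identifies with the face complex of the chamber $T$ in the contracted oriented matroid $\FFF/(E \setminus S(T, R))$; this face complex is contractible.

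The main obstacle is establishing the unique-maximum property of intersections of down-ideals in a general (not necessarily realizable) oriented matroid. The relevant minimal covectors are forced to be conformal by their common sign pattern on $S(T, R)^c$, so their composition is a covector by the OM composition axiom; making this rigorous requires careful bookkeeping with the covector operations (composition and elimination) but no realizability hypothesis. An alternative approach would be to construct an explicit monotone retraction of the preimage onto a sub-poset with a unique extremum and apply Lemma~\ref{rem:monotone}, avoiding the Nerve Theorem entirely.
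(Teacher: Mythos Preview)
Your verification of well-definedness and order-preservation is fine and more explicit than the paper's. The paper's route to contractibility of $\phi^{-1}(\QQ_{\leq(T,R)})$ is quite different and much shorter: it observes that the map $(F,C)\mapsto F$ identifies the preimage (order-reversed) with the poset of covectors $F$ satisfying $F_e=T_e$ for all $e\notin S(T,R)$, i.e.\ the poset of faces of the pseudosphere arrangement lying in the open cell of the subarrangement $\{S_e:e\notin S(T,R)\}$ that contains both $T$ and $R$. Contractibility of that face poset is then quoted from standard oriented-matroid results (\cite[Prop.~4.2.6(c)]{BLSWZ} and \cite[Thm.~4.1]{benedetti}).

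Your Nerve-Theorem approach has a genuine gap. First, the parenthetical justification for conformality is wrong: agreeing with $T$ on $S(T,R)^c$ says nothing about the coordinates in $S(T,R)$. For instance, with $H_1\colon z=0$, $H_2\colon x=0$, $H_3\colon y=0$, $H_4\colon x-y+z=0$, $H_5\colon x+2y-z=0$ in $\R^3$ and $T=(+{+}{+}{+}{+})$, $R=(+{-}{-}{-}{-})$, the minimal covectors in the region $\{F_1=+\}$ include $(+,0,0,+,-)$ and $(+,0,+,0,+)$, which are \emph{not} conformal. Your literal claim (``any nonempty intersection has a unique maximum'') is still true, because nonemptiness forces a common upper bound $G\geq F_i$ a posteriori, whence the relevant $F_i$ are conformal; but the reason you give is not the reason it holds. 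Second, and fatally, the asserted identification of the nerve with ``the face complex of the chamber $T$ in $\FFF/(E\setminus S(T,R))$'' is incorrect: since $T|_{E\setminus S(T,R)}$ is nowhere zero, $T$ does not even determine a covector of that contraction, so there is no ``chamber $T$'' there. The nerve you actually obtain is the flag complex on the conformality graph of the minimal covectors in the region, which is not the face poset of any chamber. Without a correct independent argument that this flag complex is contractible, the Nerve Lemma only tells you $\text{nerve}\simeq\text{preimage}$, and you are back where you started. The monotone-retraction alternative you mention at the end would likewise need to produce a specific extremum of the preimage poset; in the examples above there is no unique maximum or minimum, so that route also requires real work---essentially the shellability/convexity input the paper cites.
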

\begin{proof}
  The given function is clearly order-preserving. Moreover, for any given $(T,R)\in \QQ$ the preimage of $\QQ_{\leq (T,R)}$ is $$\bigg\{[F,F\circ T] \bigg\vert
  \sigma_{\mathscr A}^{-1}(F)\in \bigcap_{\substack{e\not \in S(R,T),
      \\ T\in S_e^{\tau}}}S^{\tau}_e  \bigg\}$$
  which, as a poset, is isomorphic to the poset of those cells in the
  arrangement of pseudospheres that lie in the relative interior of
  the region containing $R$ and delimited by the pseudospheres not
  separating $R$ from $T$. This poset is contractible, e.g. by
   \cite[Proposition 4.2.6 (c)]{BLSWZ} and \cite[Theorem 4.1]{benedetti}, and we conclude
  with Remark \ref{rem:replace}.
\end{proof}

\begin{lemma}\label{rho}
  For $(R,T)\in \QQ$ set $\rho(R,T):=(-T,R)$. This defines a poset isomorphism $\rho: \QQ\to \QQ^{op}$. Moreover, $\rho^4=id$.
\end{lemma}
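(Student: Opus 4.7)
My plan is to verify each of the three assertions — bijectivity, order-reversal, and $\rho^4=\mathrm{id}$ — by direct computation with separating sets of topes. I would dispose of $\rho^4=\mathrm{id}$ first, since it is purely formal: iterating $(T,R)\mapsto(-R,T)$ yields the cycle $(T,R)\to(-R,T)\to(-T,-R)\to(R,-T)\to(T,R)$. This gives bijectivity for free, with $\rho^{-1}=\rho^3$. It also makes clear that $\rho$ lands in $\QQ=\TT\times\TT$, since antipodes of topes are topes.

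The bulk of the argument is showing $\rho$ is order-reversing. Assuming $(T,R)\leq(T',R')$, i.e.\ $S(T',T)\subseteq S(T',R)\subseteq S(T',R')$, I need to produce the defining chain for $\rho(T',R')\leq\rho(T,R)$, that is $(-R',T')\leq(-R,T)$, which unpacks as
$$S(-R,-R')\subseteq S(-R,T')\subseteq S(-R,T).$$
Since the coordinates are all topes (no zero entries), one has the elementary identities $S(-A,-B)=S(A,B)$ and $S(-A,B)=E\setminus S(A,B)$, so the target rewrites as $S(R,R')\subseteq E\setminus S(T',R)\subseteq E\setminus S(T,R)$, i.e.\ the conjunction of $S(T,R)\subseteq S(T',R)$ and $S(R,R')\cap S(T',R)=\emptyset$.

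Both are consequences of the identities
$$S(T,R)=S(T',R)\setminus S(T',T),\qquad S(R,R')=S(T',R')\setminus S(T',R),$$
each proved by a two-case analysis on whether $T'_e=R_e$. If $e\notin S(T',R)$ then $T'_e=R_e$, and membership of $e$ in $S(T,R)$ coincides with membership in $S(T',T)$; but the hypothesis $S(T',T)\subseteq S(T',R)$ forces the latter to be empty in this case. If $e\in S(T',R)$ then $T'_e=-R_e$, so $e\in S(T',T)$ gives $T_e=-T'_e=R_e$ and hence $e\notin S(T,R)$, while $e\notin S(T',T)$ gives $T_e=T'_e=-R_e$ and hence $e\in S(T,R)$. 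The parallel argument with $T,R$ replaced by $R,R'$ yields the second identity.

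I do not anticipate any real obstacle; the entire proof is bookkeeping in $\{+,-\}^E$. The only point worth flagging is that the antipode identities $S(-A,-B)=S(A,B)$ and $S(-A,B)=E\setminus S(A,B)$ genuinely require $A,B$ to have no zero entries — precisely why this construction lives on $\TT\times\TT$ rather than on $\FF\times\FF$.
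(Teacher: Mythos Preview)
Your proof is correct. Both you and the paper reduce the claim to showing that $\rho$ is order-reversing (with $\rho^4=\mathrm{id}$ giving bijectivity, and the paper likewise notes ``it is enough to prove that $\rho:\QQ\to\QQ^{op}$ is a poset map'' since the inverse $\rho^3$ is then automatically order-reversing). The difference is organizational: the paper packages the key separating-set manipulations into three abstract facts about $\leqtope{B}$ (Remark~\ref{3lem}(a)--(c), cited as corollaries of \cite[Proposition~4.2.10]{BLSWZ}) and chains them together, whereas you compute directly with the identities $S(-A,-B)=S(A,B)$, $S(-A,B)=E\setminus S(A,B)$, and the set-difference formulas $S(T,R)=S(T',R)\setminus S(T',T)$, $S(R,R')=S(T',R')\setminus S(T',R)$. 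Your route is more self-contained (no external citation needed) and makes the role of the ``no zero entries'' hypothesis explicit; the paper's route is more modular and reusable. Substantively the two arguments are the same computation, just sliced differently.
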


\begin{remark}\label{3lem}
The following technical facts are a corollary of  \cite[Proposition
4.2.10]{BLSWZ}, and will be used in the proof of Lemma \ref{rho}. For all $A,B,C,D\in \TT$:

   \begin{itemize}
   \item[(a)]
$A\leqtope{B}C \Rightarrow -A\leqtope{-B} -C $;
 \item[(b)] 
$A\leqtope{B} C \leqtope{B} D \Rightarrow C\leqtope{A} D$; 
 \item[(c)] 
$A\leqtope{B} C \Rightarrow B\leqtope{-C} A$.
 \end{itemize}
\end{remark}

\begin{proof}[Proof of Lemma \ref{rho}]
  It is enough to prove that $\rho: \QQ \to \QQ^{op}$ is a poset
  map. To this end let $(R,T)\leq (R',T')\in \QQ$, meaning
  $R'\leqtope{R'} R\leqtope{R'} T\leqtope{R'} T'$. 
 Now: $R\leqtope{R'}T$ implies $R'\leqtope{-T}R$ by Remark \ref{3lem}.(c), while
 from $T\leqtope{R'}T'(\leqtope{R'}-R')$ we get $ T' \leqtope{T} -R'$
 (Remark \ref{3lem}.(b)) and thus  $-T'\leqtope{-T} R'$ (Remark \ref{3lem}.(a)). Together, we obtain $-T'\leqtope{-T} R' \leqtope{-T} R$, i.e., $(-T,R)\geq (-T',R')$ as required.
\end{proof}

\begin{theorem}\label{Th1}
 The assignment $n\mapsto \rho^n$ defines an action of $\mathbb Z_4$
 on $\Delta(\QQ)$ (and thus a simplicial action on the complex $\vert\QQ \vert$).
\end{theorem}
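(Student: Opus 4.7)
The plan is to deduce Theorem \ref{Th1} as an essentially formal consequence of Lemma \ref{rho} together with Remark \ref{rem:op}(a). The one subtle point is that Lemma \ref{rho} presents $\rho$ as a poset isomorphism $\QQ \to \QQ^{op}$ rather than as an endomorphism of $\QQ$, so the usual functoriality of the order-complex construction on order-preserving maps does not directly produce a simplicial self-map of $\Delta(\QQ)$. This is the only thing that requires care.

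To resolve it, I would invoke Remark \ref{rem:op}(a): a subset of the underlying set $\TT \times \TT$ of $\QQ$ is totally ordered with respect to $\leq$ if and only if it is totally ordered with respect to $\leq^{op}$, so $\Delta(\QQ) = \Delta(\QQ^{op})$ as abstract simplicial complexes. Since $\rho$ is a bijection of $\TT \times \TT$ and, viewed as a poset isomorphism $\QQ \to \QQ^{op}$, carries chains of $\QQ$ bijectively onto chains of $\QQ^{op}$, which are the same subsets as chains of $\QQ$, it therefore descends to a simplicial automorphism of $\Delta(\QQ)$.

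Each iterate $\rho^n$ is likewise a chain-preserving bijection on $\TT \times \TT$, with $\rho^{m+n}=\rho^m\circ\rho^n$ holding already at the set-theoretic level. Since $\rho^4 = \mathrm{id}$ by Lemma \ref{rho}, the assignment $n \mapsto \rho^n$ factors through $\mathbb{Z}_4$ and defines the claimed simplicial $\mathbb{Z}_4$-action on $\Delta(\QQ)$, and hence on $|\QQ|$. The substantive work—checking that $\rho$ is an order-reversing involution up to squaring—has already been packaged into Lemma \ref{rho}, so no genuine obstacle is expected here; the present theorem is the bookkeeping step that records how the order-reversing symmetry of $\QQ$ lifts to a simplicial symmetry of its order complex.
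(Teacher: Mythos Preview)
Your proposal is correct and takes exactly the same approach as the paper, which simply cites Lemma~\ref{rho} together with Remark~\ref{rem:op}(a). You have merely unpacked in detail what the paper leaves implicit: that an order-reversing bijection preserves chains because $\Delta(\QQ)=\Delta(\QQ^{op})$, and that $\rho^4=\mathrm{id}$ makes the action factor through $\mathbb Z_4$.
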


\begin{proof}
  This follows from Lemma \ref{rho} with Remark \ref{rem:op}.
\end{proof}

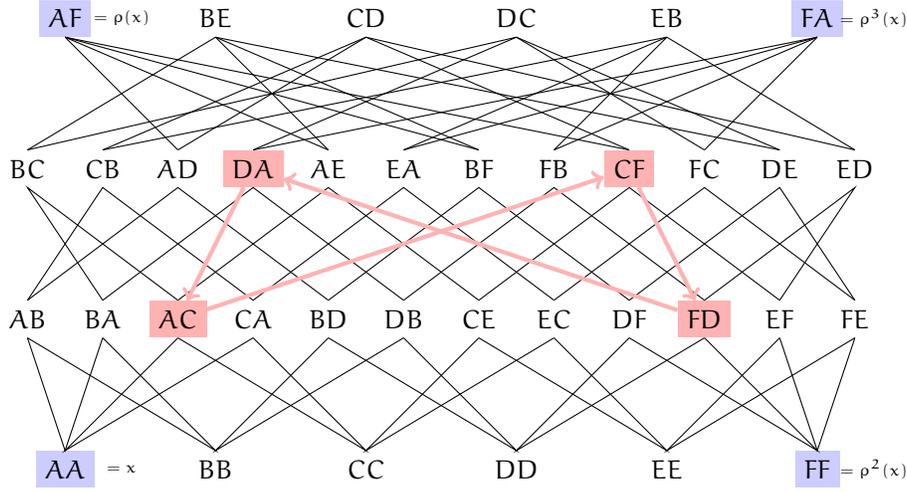
\begin{figure}[h]\centering
  
  \begin{tikzpicture}[scale=0.5]
    \node[fill=blue!20] (AF2) at (1,10) {$AF$}; \node (AF) at (2.5,10)
    {{\tiny $=\rho(x)$}}; \coordinate (AF-) at (1,9.5); \node (BE) at
    (5,10) {$BE$}; \coordinate (BE-) at (5,9.5) {}; \node (CD) at
    (9,10) {$CD$}; \coordinate (CD-) at (9,9.5) {}; \node (DC) at
    (13,10) {$DC$}; \coordinate (DC-) at (13,9.5) {}; \node (EB) at
    (17,10) {$EB$}; \coordinate (EB-) at (17,9.5) {};
    \node[fill=blue!20] (FA) at (21,10) {$FA$}; \node (FA2) at
    (22.5,10) {{\tiny $=\rho^3(x)$}}; \coordinate (FA-) at (21,9.5)
    {};
    \coordinate (AD+) at (4,6.5) {}; \node (AD) at (4,6) {$AD$};
    \coordinate (AD-) at (4,5.5) {}; \coordinate (DA+) at (6,6.5) {};
    \node[fill=red!30] (DA) at (6,6) {$DA$}; \coordinate (DA-) at
    (6,5.5) {};
    \coordinate (BC+) at (0,6.5) {}; \node (BC) at (0,6) {$BC$};
    \coordinate (BC-) at (0,5.5) {}; \coordinate (CB+) at (2,6.5) {};
    \node (CB) at (2,6) {$CB$}; \coordinate (CB-) at (2,5.5) {};
    \coordinate (AE+) at (8,6.5) {}; \node (AE) at (8,6) {$AE$};
    \coordinate (AE-) at (8,5.5) {}; \coordinate (EA+) at (10,6.5) {};
    \node (EA) at (10,6) {$EA$}; \coordinate (EA-) at (10,5.5) {};
    \coordinate (BF+) at (12,6.5) {}; \node (BF) at (12,6) {$BF$};
    \coordinate (BF-) at (12,5.5) {}; \coordinate (FB+) at (14,6.5)
    {}; \node (FB) at (14,6) {$FB$}; \coordinate (FB-) at (14,5.5) {};
    \coordinate (CF+) at (16,6.5) {}; \node[fill=red!30] (CF) at
    (16,6) {$CF$}; \coordinate (CF-) at (16,5.5) {}; \coordinate (FC+)
    at (18,6.5) {}; \node (FC) at (18,6) {$FC$}; \coordinate (FC-) at
    (18,5.5) {}; \coordinate (DE+) at (20,6.5) {}; \node (DE) at
    (20,6) {$DE$}; \coordinate (DE-) at (20,5.5) {}; \coordinate (ED+)
    at (22,6.5) {}; \node (ED) at (22,6) {$ED$}; \coordinate (ED-) at
    (22,5.5) {};
    \coordinate (AB+) at (0,2.5) {}; \node (AB) at (0,2) {$AB$};
    \coordinate (AB-) at (0,1.5) {}; \coordinate (BA+) at (2,2.5) {};
    \node (BA) at (2,2) {$BA$}; \coordinate (BA-) at (2,1.5) {};
    \coordinate (AC+) at (4,2.5) {}; \node[fill=red!30] (AC) at (4,2)
    {$AC$}; \coordinate (AC-) at (4,1.5) {}; \coordinate (CA+) at
    (6,2.5) {}; \node (CA) at (6,2) {$CA$}; \coordinate (CA-) at
    (6,1.5) {}; \coordinate (BD+) at (8,2.5) {}; \node (BD) at (8,2)
    {$BD$}; \coordinate (BD-) at (8,1.5) {}; \coordinate (DB+) at
    (10,2.5) {}; \node (DB) at (10,2) {$DB$}; \coordinate (DB-) at
    (10,1.5) {}; \coordinate (CE+) at (12,2.5) {}; \node (CE) at
    (12,2) {$CE$}; \coordinate (CE-) at (12,1.5) {}; \coordinate (EC+)
    at (14,2.5) {}; \node (EC) at (14,2) {$EC$}; \coordinate (EC-) at
    (14,1.5) {}; \coordinate (DF+) at (16,2.5) {}; \node (DF) at
    (16,2) {$DF$}; \coordinate (DF-) at (16,1.5) {}; \coordinate (FD+)
    at (18,2.5) {}; \node[fill=red!30] (FD) at (18,2) {$FD$};
    \coordinate (FD-) at (18,1.5) {}; \coordinate (EF+) at (20,2.5)
    {}; \node (EF) at (20,2) {$EF$}; \coordinate (EF-) at (20,1.5) {};
    \coordinate (FE+) at (22,2.5) {}; \node (FE) at (22,2) {$FE$};
    \coordinate (FE-) at (22,1.5) {};
    \coordinate (AA+) at (1,-1.5) {}; \node[fill=blue!20] (AA) at
    (1,-2) {$AA$}; \node (AA2) at (2.5,-2) {{\tiny $= x $}};
    \coordinate (BB+) at (5,-1.5) {}; \node (BB) at (5,-2) {$BB$};
    \coordinate (CC+) at (9,-1.5) {}; \node (CC) at (9,-2) {$CC$};
    \coordinate (DD+) at (13,-1.5) {}; \node (DD) at (13,-2) {$DD$};
    \coordinate (EE+) at (17,-1.5) {}; \node (EE) at (17,-2) {$EE$};
    \coordinate (FF+) at (21,-1.5) {}; \node[fill=blue!20] (FF) at
    (21,-2) {$FF$}; \node (FA2) at (22.5,-2) {{\tiny $=\rho^2(x)$}};
    \draw (AF-) -- (AD+); \draw (AF-) -- (AE+); \draw (AF-) -- (BF+);
    \draw (AF-) -- (CF+); \draw (BE-) -- (BF+); \draw (BE-) -- (BC+);
    \draw (BE-) -- (DE+); \draw (BE-) -- (AE+); \draw (CD-) -- (CB+);
    \draw (CD-) -- (CF+); \draw (CD-) -- (AD+); \draw (CD-) -- (ED+);
    \draw (DC-) -- (DE+); \draw (DC-) -- (DA+); \draw (DC-) -- (BC+);
    \draw (DC-) -- (FC+); \draw (EB-) -- (ED+); \draw (EB-) -- (EA+);
    \draw (EB-) -- (FB+); \draw (EB-) -- (CB+); \draw (FA-) -- (FB+);
    \draw (FA-) -- (FC+); \draw (FA-) -- (DA+); \draw (FA-) -- (EA+);
    \draw (AB+) -- (AD-) -- (BD+); \draw (DB+) -- (DA-) -- (BA+);
    \draw (BA+) -- (BC-) -- (AC+); \draw (CA+) -- (CB-) -- (AB+);
    \draw (AC+) -- (AE-) -- (CE+); \draw (EC+) -- (EA-) -- (CA+);
    \draw (BD+) -- (BF-) -- (DF+); \draw (FD+) -- (FB-) -- (DB+);
    \draw (CE+) -- (CF-) -- (EF+); \draw (FE+) -- (FC-) -- (EC+);
    \draw (DF+) -- (DE-) -- (FE+); \draw (EF+) -- (ED-) -- (FD+);
    \draw (AB-) -- (AA+) -- (BA-); \draw (AC-) -- (AA+) -- (CA-);
    \draw (AB-) -- (BB+) -- (BA-); \draw (BD-) -- (BB+) -- (DB-);
    \draw (AC-) -- (CC+) -- (CA-); \draw (EC-) -- (CC+) -- (CE-);
    \draw (BD-) -- (DD+) -- (DB-); \draw (DF-) -- (DD+) -- (FD-);
    \draw (EC-) -- (EE+) -- (CE-); \draw (EF-) -- (EE+) -- (FE-);
    \draw (DF-) -- (FF+) -- (FD-); \draw (EF-) -- (FF+) -- (FE-);
    \draw [->, ultra thick, red!30] (AC) -- (CF); \draw [->, ultra
    thick, red!30] (CF) -- (FD); \draw [->, ultra thick, red!30] (FD)
    -- (DA); \draw [->, ultra thick, red!30] (DA) -- (AC);

  \end{tikzpicture}
  \caption{The poset $\QQ$ for the arrangement of Figure
    \ref{fig:11}. Two orbits of the $\mathbb Z_4$-action are
    shaded. The image of the inclusion of $\SS$ given in Lemma \ref{lem:homeq}
    are all elements of rank $0,1$ and $3$.}
\end{figure}

\begin{remark} Suppose a group $G$ acts on the set of topes of \FFF,
  preserving adjacency. Then $G$ acts on both the Salvetti poset \SS\
  and the tope-pairs poset \QQ\ by order-preserving maps (see
  Remark~\ref{rem:tope}), and the homotopy equivalence of
  Lemma~\ref{lem:homeq} is $G$-equivariant. In case $G$ is a finite
  real linear group generated by reflections, and  \FFF\ is the
  oriented matroid of the associated real reflection arrangement, the
  homotopy equivalence of \SS\ with the complexified complement can be
  chosen to be $G$-equivariant \cite{Salv87}.  Then the orbit space
  $|\QQ|/G$ of the simplicial $G$-action on $|\QQ|$ is homotopy
  equivalent to the complement in $\C^r$ of the $G$-discriminant
  $D_G$. $|\QQ|/G$ is not the realization of a poset, but rather is a
  ``trisp," realizing a small acyclic category \cite{Kozlov}. There is
  another combinatorial model for $\C^r - D_G$ based on the lattice of
  $G$-noncrossing partitions. We ask for a combinatorial homotopy
  equivalence connecting these two models.

\end{remark} 


\section{Combinatorial deconing}

Recall that, throughout, $\FF$ denotes the poset of covectors of an
arbitrary (but fixed) oriented matroid on the
ground set $E$.

\begin{definition}\label{DefDec}
  Every choice of an element $e\in E$ gives rise to an {\em affine
    oriented matroid} with poset of covectors
  \begin{displaymath}
    d_e\FF:=\{F\in \FF\mid F_e = +\}.
  \end{displaymath}
 From now an arbitrary element $e\in E$ will be fixed, and 
 we will simply write $d\FF$.

Accordingly, we define the subposets
\begin{displaymath}
  d\SS:=\{[F,C]\in \SS \mid F,C\in\FF\} \subseteq \SS
\end{displaymath}
\begin{displaymath}
  d\QQ:=\{(R,T)\in \QQ \mid R_e=T_e=+\}\subseteq \QQ
\end{displaymath}
\end{definition}

\begin{remark} The map of Lemma \ref{lem:homeq} restricts to a poset
  map $d\SS \to d\QQ$ which induces homotopy equivalence. 
\end{remark}

\begin{definition}
  Consider the oriented matroid of rank $1$ on the ground set
  $\{e\}$, with sets of covectors and topes $\FF_{{e}}$ and $\TT_{{e}}$. The action of $\mathbb Z_4$ on the associated poset
  $\QQ_{{e}}$ is transitive. Choosing $R\in\TT_{{e}}$ we can identify the elements of $\QQ_{{e}}$ with elements of $\mathbb
  Z_4$ so that, for $i=0,\ldots,3$, $\rho^i(R,R)$ is identified with
  the class $[i]\in\mathbb Z_4$.

\end{definition}

\begin{definition}
  Define a function $\Psi: \Delta(\QQ_e)\times\Delta(d\QQ)^{op}\to \QQ$ so that
$$
\Psi (\{{[i]}\},\omega_1<\cdots < \omega_k) :=\left\{
  \begin{array}{ll}
    \rho^i(\omega_1) & i \textrm{ even}\\
    \rho^i(\omega_k) & i \textrm{ odd} \\
  \end{array}\right.
$$
$$
\Psi(\{{[i]},{[i+1]}\},\omega) := \Psi({[i]},\omega) \vee \Psi({[i+1]},\omega)
$$
\end{definition}

\begin{remark}[Joins in $\QQ$]
 Although $\QQ$ is certainly not a lattice, the `join' in the above
  definition - which should be thought of as `the minimum among all elements
  that are above  both terms' -  is well-defined in the cases we need.
 Indeed,
 w.l.o.g. $\Psi({[i]},\omega)\vee\Psi({[i+1]},\omega)=(A,B)\vee(-D,C)$
 for some $A,B,C,D\in \TT$ with $(A,B)\geq (C,D)$, and one sees that
 the join operation determines the element $(-D,B)$.
\end{remark}

\begin{remark}[Notation]\label{rem:notaQe}
   For ease of notation we will from now omit all brackets when
   referring to elements of  $\QQ_{{e}}$ or
   $\Delta(\QQ_{{e}})$. Thus writing for instance $12$ instead of
   $\{[1],[2]\}\subset \mathbb Z_4$, and obtain the picture shown in
   Figure \ref{fig:1}.
\end{remark}

\begin{figure}[h]\centering
  
  \begin{tikzpicture}
    \node (A) at (0,2) {1}; \node (B) at (0,0) {0}; \node (C) at (2,2)
    {3}; \node (D) at (2,0) {2}; \path[-,font=\scriptsize,>=angle 90]
    (A) edge (B) (A) edge (D) (C) edge (B) (C) edge (D);
  \end{tikzpicture}$\quad\quad\quad$
  \begin{tikzpicture}
    \node (A) at (0,2) {01}; \node (B) at (2,2) {12}; \node (C) at
    (4,2) {23}; \node (D) at (6,2) {03}; \node (E) at (0,0) {0}; \node
    (F) at (2,0) {1}; \node (G) at (4,0) {2}; \node (H) at (6,0) {3};
    \path[-,font=\scriptsize,>=angle 90] (A) edge (E) (B) edge (F) (C)
    edge (G) (D) edge (H) (E) edge (D) (F) edge (A) (G) edge (B) (H)
    edge (C);
  \end{tikzpicture}
  
  \caption{Hasse diagrams of the posets $\QQ_e$ and $\Delta(\QQ_e)$,
    using the notational convention of Remark \ref{rem:notaQe}.}
\label{fig:1}
\end{figure}
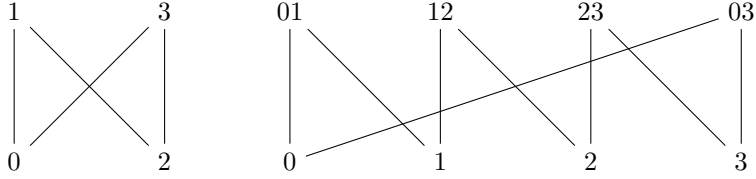

\begin{remark}\label{rem:explicit}
  It will be convenient to examine
  explicitly the function $\Psi$. If
  $\omega=(A_1,B_1)<\ldots<(A_k,B_k)$ is a chain in $d\QQ$, we have
  \begin{displaymath}
    \begin{array}{ll}
    \Psi(0,\omega)=(A_1,B_1);&
    \Psi(01,\omega)=(-B_k,B_1);\\
    \Psi(1,\omega)=(-B_k,A_k);& \Psi(12,\omega)=(-A_1,A_k);\\
    \Psi(2,\omega)=(-A_1,-B_1);&
    \Psi(23,\omega)=(B_k,-B_1);\\
    \Psi(3,\omega)=(B_k,-A_k);& \Psi(03,\omega)=(A_1,-A_k).\\
  \end{array}
  \end{displaymath}
\end{remark}

\begin{lemma}\label{lem:main}
  The function $\Psi$ defines a poset map and induces a homotopy equivalence.
\end{lemma}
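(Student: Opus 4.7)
The plan has two parts. First, I would verify that $\Psi$ is order-preserving by checking coverings in $\Delta(\QQ_e) \times \Delta(d\QQ)^{op}$. Such a cover either removes an element of the chain $\omega = \omega_1 < \cdots < \omega_k$ in $d\QQ$, or enlarges the simplex $\sigma \in \Delta(\QQ_e)$ by a vertex. Since by Remark~\ref{rem:explicit} the value $\Psi(\sigma,\omega)$ depends only on the endpoints $\omega_1$ and $\omega_k$, removing an interior element of $\omega$ leaves $\Psi$ unchanged. Removing $\omega_1$ or $\omega_k$ replaces one endpoint by its successor, and the required comparison in $\QQ$ follows directly from the covector identities in Remark~\ref{3lem}. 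Enlarging $\sigma$ from a vertex $i$ to an edge $i(i+1)$ yields by definition the join of the two vertex values, which dominates both by construction. By the $\Z_4$-symmetry of Lemma~\ref{rho}, only the cases $\sigma = 0$ and $\sigma = 01$ need to be verified in detail.

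Second, I would apply Lemma~\ref{lem:quillen} via Remark~\ref{rem:replace}: for each $(T,R) \in \QQ$, show that $F := \Psi^{-1}(\QQ_{\leq(T,R)})$ is contractible. The analysis splits by the signs $(T_e, R_e)$. In every case, matching the constraint $(X, Y) \leq (T,R)$ against the eight formulas of Remark~\ref{rem:explicit} restricts the admissible $\sigma$ to a sub-complex of the 4-cycle $\Delta(\QQ_e)$: a single vertex when $T_e = R_e$, and a sub-arc with three vertices and two edges (obtained from $\Delta(\QQ_e)$ by removing one vertex and its two incident edges) when $T_e \ne R_e$. For each admissible $\sigma$, the slice $F_\sigma = \{\omega : \Psi(\sigma,\omega) \leq (T,R)\}$ depends on $\omega$ only through $\min\omega$ or $\max\omega$, and is the preimage of an order ideal $I \subseteq d\QQ$ having a unique maximum. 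The monotone retraction $\omega \mapsto \omega \cap I$ (Lemma~\ref{rem:monotone}) then contracts $F_\sigma$ onto $\Delta(I) \simeq I$ (Remark~\ref{rem:can}), which is a cone (Remark~\ref{remCone}).

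In the aligned cases $F$ is a single contractible slice. In the mixed cases, the order-preservation of $\Psi$ combined with the join formula gives $F_{i(i+1)} = F_i \cap F_{i+1}$, and these intersections are themselves contractible by the same retraction argument. The fiber $F$ is thus a union of contractible slices indexed by a contractible sub-arc of $\Delta(\QQ_e)$, with contractible pairwise intersections; the nerve lemma (or a direct monotone retraction on $F$) yields contractibility of $F$. The main obstacle is the combinatorial bookkeeping in the mixed-sign cases: one must verify that the slice intersections match coherently along the admissible arc in $\Delta(\QQ_e)$. The $\Z_4$-symmetry cuts the four sign cases down to two but does not collapse the aligned-versus-mixed distinction.
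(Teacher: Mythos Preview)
Your overall architecture matches the paper's: verify $\Psi$ is order-preserving, then apply Quillen's fiber lemma with a case split governed by the signs $(T_e,R_e)$, and in the mixed-sign case glue contractible pieces via a nerve-type argument. You work with downward fibers $\Psi^{-1}(\QQ_{\leq(T,R)})$ while the paper uses upward fibers $\Psi^{-1}(\QQ_{\geq(C,K)})$; either is legitimate, and in the aligned case your version is in fact cleaner, since only $\sigma=0$ survives and $d\QQ_{\leq(T,R)}$ is a cone with apex $(T,R)$.

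The gap is in the mixed-sign case. Your assertion that each slice $F_\sigma$ retracts onto $\Delta(I)$ for an order ideal $I\subseteq d\QQ$ \emph{with a unique maximum} is false. Take the rank-two oriented matroid on three elements $\{e,f,g\}$ in general position, with topes $1,\dots,6$ in cyclic order ($4=-1$, etc.) and $e$ separating $\{1,2,3\}$ from $\{4,5,6\}$. For $T=2$ and $R=-T=5$ one has $T_e=+$, $R_e=-$, and
\[
I_0 \;=\; d\QQ \cap \QQ_{\leq(T,R)} \;=\; \{(1,1),(2,1),(2,2),(2,3),(3,3)\},
\]
which has \emph{two} maximal elements $(2,1)$ and $(2,3)$. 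So your retraction $\omega\mapsto\omega\cap I$ lands in $\Delta(I_0)$, but $I_0$ is not a cone and Remark~\ref{remCone} does not apply. (In this particular example $|I_0|$ is still contractible --- it is a path --- but that is exactly what needs an argument.) The same issue affects $F_2$ and $F_3$. The paper confronts the analogous difficulty by working with the pieces $\PP_I,\PP_{II},\PP_{III}$ and producing an explicit order-reversing bijection from $\PP_I$ to a product $\Delta[C',K']\times\Delta^{\dag\dag}[-C',K']$ of chain-posets on tope intervals; this is where the real combinatorial content of the mixed case lives, and your outline does not supply a substitute.

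A secondary issue: your nerve-lemma step is not well-posed as stated. The ``slices'' $\{\sigma\}\times F_\sigma$ for distinct $\sigma$ are pairwise disjoint subposets of $F$, so their intersections are empty rather than contractible; you would need to cover $F$ by genuinely overlapping subposets (e.g.\ the down-sets $F_{\leq\,\text{edge}}$ over the two edges of the admissible arc, whose intersection is $F_{\leq\,\text{middle vertex}}$), and then check contractibility of those --- which again reduces to the contractibility of the $I$'s above.
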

\begin{proof} The definition of $\Psi$ shows that it is a poset
  map. To prove homotopy equivalence, 
  We consider preimages of elements $(C,K)\in\QQ$ and verify the condition of Lemma \ref{lem:quillen}.
  \begin{list}{}
    {\setlength{\leftmargin}{2em}
      \setlength{\labelwidth}{0em}
      \setlength{\labelsep}{0em}
      \setlength{\itemindent}{-\leftmargin}
    }
  \item {\em Case 1: $C_e=K_e=+$.} First, from the explicit description of
    $\Psi$ in Remark \ref{rem:explicit} notice the poset isomorphism
    \begin{displaymath}
      \Psi^{-1}(\QQ_{\geq  (C,K)}) 
      \cong \{
      \omega \in \Delta(d\QQ)^{op} \mid
      \max\omega\in d\QQ_{\geq (C,K)}\}.
    \end{displaymath}
    Define a diagram of posets
    \begin{displaymath}
      \DD: (d\QQ_{\geq (C,K)})^{op} \to \operatorname{Pos}; \quad \DD(X,Y)= \Delta^\dag(\QQ_{\leq (X,Y)})^{op}
    \end{displaymath}
    with diagram maps being inclusions.

\def\plim{\operatorname{plim}}
    Then the poset limit $\plim \DD$ is a poset with elements $((X,Y),
    \omega)$, where $(X,Y)\geq (C,K)$ and $\max \omega=(X,Y)$, ordered according to 
    $$
    ((X,Y),\omega) \leq ((X',Y'),\omega') \Leftrightarrow
    (X,Y) \geq (X',Y'),\textrm{ and } \omega \supseteq \omega'.
    $$
    Thus we have an evident poset isomorphism
   $
   \Psi^{-1}(\QQ_{\geq (C,K)}) \cong \plim \DD
   $
\def\hocolim{\operatorname{hocolim}}
   and homotopy equivalences (see \cite[Corollary 2.11]{BK} and
   \cite[Section 15]{Kozlov})
   $$
   \vert  \Psi^{-1}(\QQ_{\geq (C,K)})\vert \simeq \vert\plim \DD \vert
   \simeq \hocolim \vert \DD \vert.
   $$
 Here $\vert \DD \vert$ is the diagram of geometric realizations of \DD\ in the category of topological spaces and continuous maps.
   Now with Lemma \ref{lem:delta} we have that every space $\vert
   \DD(X,Y) \vert$ is homotopy equivalent to $\vert\Delta(\QQ_{\leq
     (X,Y)})^{op}\vert$, thus contractible. With \cite[Theorem
   15.19]{Kozlov} we obtain  
   $$
  \vert \Psi^{-1}(\QQ_{\geq (C,K)})\vert \simeq \hocolim \vert \DD\vert
   \simeq \vert d\QQ_{\geq (C,K)}\vert \simeq *.
   $$
\item {\em Case 2: $-C_e=K_e=+$.} Again, with Remark
  \ref{rem:explicit} we can write explicitly 
\def\PP{\mathcal P}
\def\RR{\mathcal R}
  \begin{align*}
    &\Psi^{-1}(\QQ_{\geq (C,K)}) =\\ &\quad \{(01,
    (A_1,K')<\ldots<(A_k,-C'))\mid (K',-C')\leq(K,-C)\} =:\PP_I\\
    &\quad\cup \{(1,
    (A_1,B_1)<\ldots<(K',-C'))\mid (K',-C')\leq(K,-C)\} =:\PP_{II}\\
    &\quad\cup \{(12,
    (-C',B_1)<\ldots<(K,B_k))\mid (K',-C')\leq(K,-C)\} =:\PP_{III}\\
  \end{align*}
  It is immediate to see that $\PP_{II}=\{1\}\times \Delta
  (\QQ_{\leq (K,-C)})^{op}$ and is thus contractible. Moreover, notice that
  $(1,\omega)\in \PP_{II}$ implies both $(01,\omega) \in
  \PP_I$ and $(12,\omega)\in \PP_{III}$, for all $\omega$. Thus,
  by defining $\RR:=\Delta(\QQ_e)_{\geq \{1\}}\times \PP_{II}$, we
  have a covering of $\Psi^{-1}(C,K)$ by three posets
  $\PP_I,\RR,\PP_{III}$ with $\PP_I\cap \RR \simeq \PP_{III}\cap \RR
  \simeq \PP_{II}$ (thus contractible) and $\PP_I\cap \PP_{III} =
  \emptyset$. By the generalized nerve lemma  \cite[Theorem
  15.24]{Kozlov} applied to the covering of
  $\vert\Psi^{-1}(C,K)\vert$ by its subcomplexes $\vert\PP_I\vert$,
  $\vert\RR\vert$  and
  $\vert\PP_{III}\vert$, the poset
  $\Psi^{-1}(C,K)$ is contractible if $\PP_I$ and $\PP_{III}$ are.

  We are thus left with proving contractibility of $\PP_I$
  (contractibility of $\PP_{III}$ follows by a similar argument). To
  this end, notice first of all that $(A_1,K')<(A_2,B_2)<\ldots
  <(A_k,-C')$ is a chain if and only if
  \begin{displaymath}
    A_k \leqtope{C'} \ldots \leqtope{C'} A_1 \leqtope{C'} K'
    \leqtope{C'} B_2 \leqtope{C'} \ldots \leqtope{C'} B_{k-1}
  \end{displaymath}
  We thus obtain a bijection
  \begin{displaymath}
    \PP_{I} \to \Delta\pos{C'}{K'} \times \Delta^{\dag\dag}\pos{-C'}{K'};\quad\quad (01,\omega) \mapsto \omega
  \end{displaymath}
  which is clearly order-reversing. Thus
  \begin{displaymath}
    \PP_I \simeq \Delta\pos{C'}{K'} \times \Delta^{\dag\dag}\pos{-C'}{K'} \simeq \pos{C'}{K' } \times \Delta\pos{-C'}{K'} \simeq *.
  \end{displaymath}
\item The other cases are treated analogously to the above.
  \end{list}
\end{proof}

\begin{theorem}\label{Th2}
  For every given oriented matroid and any element of its ground set:
$$
\vert \SS \vert \simeq S^1\times \vert d\SS \vert.
$$
\end{theorem}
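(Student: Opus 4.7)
The plan is to simply chain together the tools the paper has already developed. By Lemma~\ref{lem:homeq} we have $|\SS|\simeq|\QQ|$, and Lemma~\ref{lem:main} rewrites $|\QQ|$ as $|\Delta(\QQ_e)\times\Delta(d\QQ)^{op}|$ up to homotopy. The key structural observation is that the order complex of a product of posets triangulates the product of the order complexes (Remark~\ref{rem:op}(b)), so
\[
|\Delta(\QQ_e)\times\Delta(d\QQ)^{op}|\cong|\Delta(\QQ_e)|\times|\Delta(d\QQ)^{op}|.
\]
It then remains to identify each factor on the right with, respectively, $S^1$ and $|d\SS|$.

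For the second factor, Remark~\ref{rem:op}(a) gives $\Delta(d\QQ)^{op}=\Delta(d\QQ)$ as simplicial complexes (they have the same underlying set of chains), and Remark~\ref{rem:can} yields $|\Delta(d\QQ)|\simeq|d\QQ|$; combined with Lemma~\ref{lem:homeq} restricted to the deconed posets (noted after Definition~\ref{DefDec}), this gives $|\Delta(d\QQ)^{op}|\simeq|d\SS|$.

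For the first factor, I would unpack $\QQ_e$ explicitly. The rank-one oriented matroid on $\{e\}$ has exactly two topes $\pm$, so $\QQ_e=\TT\times\TT$ has four elements. A direct check of the relation $(T,R)\leq(T',R')\Leftrightarrow T\leqtope{T'}R\leqtope{T'}R'$ (using that $S(B,X)$ is either $\emptyset$ or $\{e\}$) shows that the Hasse diagram of $\QQ_e$ is the $4$-cycle indicated in the left half of Figure~\ref{fig:1}, with the two sign-agreeing pairs $(+,+),(-,-)$ as minima and the two sign-disagreeing pairs as maxima. The order complex of a $4$-cycle poset is a cycle graph on four vertices, so $|\QQ_e|\cong S^1$, and by Remark~\ref{rem:can} we conclude $|\Delta(\QQ_e)|\simeq|\QQ_e|\simeq S^1$.

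Stitching the four ingredients together yields the desired homotopy equivalence $|\SS|\simeq S^1\times|d\SS|$. There is no real obstacle here because the heavy lifting was done in Lemma~\ref{lem:main}; the only thing to be careful about is the bookkeeping of \emph{op}-ing (the product involves $\Delta(d\QQ)^{op}$, not $\Delta(d\QQ)$), but this is harmless since the order complex of a poset equals that of its opposite.
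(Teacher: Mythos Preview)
Your proof is correct and follows exactly the approach the paper intends: the paper's own proof is the one-liner ``Immediate applying Remark~\ref{rem:can} to Lemma~\ref{lem:main},'' and you have simply unpacked this by chaining Lemma~\ref{lem:homeq}, Lemma~\ref{lem:main}, Remark~\ref{rem:op}, Remark~\ref{rem:can}, and the explicit identification $|\QQ_e|\cong S^1$. The only cosmetic remark is that your phrase ``$\Delta(d\QQ)^{op}=\Delta(d\QQ)$ as simplicial complexes'' would be clearer as ``$|\Delta(d\QQ)^{op}|=|\Delta(d\QQ)|$ since a poset and its opposite have the same order complex,'' but the intended meaning is unambiguous.
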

\begin{proof}
Immediate applying Remark \ref{rem:can} to Lemma \ref{lem:main}.
\end{proof}

\begin{corollary}[Theorem 4.2 of \cite{cordovil}] 
For every given oriented matroid and any element of its ground set:
$\pi_1(\vert \SS \vert) \simeq  \mathbb Z \times \pi_1(\vert d\SS \vert)$.
 \end{corollary}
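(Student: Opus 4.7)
The plan is to deduce the corollary immediately from Theorem \ref{Th2} by applying the fundamental group functor. Concretely, Theorem \ref{Th2} provides a homotopy equivalence $|\SS| \simeq S^1 \times |d\SS|$, and since $\pi_1$ is a homotopy invariant, we have $\pi_1(|\SS|) \cong \pi_1(S^1 \times |d\SS|)$.

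Next I would invoke the standard fact that the fundamental group commutes with finite products of based spaces: $\pi_1(X \times Y, (x_0, y_0)) \cong \pi_1(X, x_0) \times \pi_1(Y, y_0)$, established by sending a loop $\gamma$ to the pair $(\mathrm{pr}_X \circ \gamma, \mathrm{pr}_Y \circ \gamma)$ and checking that the inverse is given by componentwise concatenation. Combining this with $\pi_1(S^1) \cong \Z$ yields
\[
\pi_1(|\SS|) \;\cong\; \pi_1(S^1) \times \pi_1(|d\SS|) \;\cong\; \Z \times \pi_1(|d\SS|),
\]
as required. (A minor technical point: $|d\SS|$ is nonempty and path-connected, so basepoints may be chosen without ambiguity; path-connectedness follows from the fact that any two topes in $d\FF$ can be joined by a tope-walk in the affine arrangement, which lifts to a path in the order complex.)

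There is essentially no obstacle here beyond citing Theorem \ref{Th2}; the entire content of the statement has already been absorbed into the combinatorial homotopy equivalence of Lemma \ref{lem:main} and its reformulation in Theorem \ref{Th2}. The conceptual payoff, as the introduction emphasizes, is that Cordovil's result \cite{cordovil}, originally obtained by intricate group-presentation calculations, is recovered here as a one-line consequence of a geometric product decomposition at the level of spaces.
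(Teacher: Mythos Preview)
Your proposal is correct and follows exactly the paper's intended approach: the corollary is an immediate consequence of Theorem~\ref{Th2} upon applying $\pi_1$ and using $\pi_1(S^1\times X)\cong \Z\times\pi_1(X)$. The paper gives no separate proof beyond stating the corollary directly after Theorem~\ref{Th2}, so your write-up is, if anything, more detailed than what the paper records.
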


\section{Non-realizable groups}

We close by exhibiting an oriented matroid \FFF\ for which the fundamental group $\pi_1(|\QQ|) \cong \pi_1(|\SS|)$ is not isomorphic to the fundamental group of the complement of any arrangement (complexified or not) of linear hyperplanes in $\C^r$. Thus the homotopy type of \QQ\ is not represented by a complex arrangement complement. To our knowledge no example of either phenomenon has appeared in the literature. The example illustrates that results such as ours extending properties of arrangement groups to the non-realizable case are strict generalizations of the existing theory.

Let \FFF\ be an oriented matroid on ground set $E$. Let $A_\Z=A_\Z(\FFF)$ be the cohomology ring $H^*(|\SS|,\Z)$ of $|\SS|$. By \cite{GR,BZ}, $A_\Z$ is isomorphic as a graded algebra to the Orlik-Solomon (OS) algebra of the underlying unoriented matroid $\uF$  of \FFF,  the quotient of the exterior algebra on $E$ by the ideal generated by ``boundaries" of circuits in $\uF$ - see \cite{Yuz01}. In particular, $A_\Z$ is generated in degree one, and $A_Z^1 \cong \Z^E$. In fact $\A_Z$ is a free \Z-module, by \cite{JT2}. 

We piece together several known results about the relation of $A$ to $\pi_1(|\SS|)$, and about degree-one resonance varieties of OS algebras, to draw our conclusions. 

\begin{definition}
Let $A$ be a graded $\kk$-algebra, \kk\ a field. The {\em degree-one resonance variety} of $A$ is the subset $\RR^1(A)$ of $A^1$ given by 
\[
\RR^1(A)=\{ a \in A^1 \mid ab=0 \ \text{for some} \ b \in A^1 - \kk a\}.
\]
\end{definition}

We will be concerned with $\RR^1(A)$ for the OS algebra $A=A_\Z \otimes \C$ with complex coefficients. Resonance varieties were introduced in this context and their basic properties established in \cite{Fa97} - see also \cite{Fa07,FY07}. In this case, $\RR^1(A)$ is a union of linear subspaces of $A^1\cong \C^E$, by \cite{LY00,CS3}, every two of which intersect trivially \cite{LY00}. All components are contained in the diagonal hyperplane $H_0=\{x \in \C^E \mid \sum_{e\in E} x_e = 0\}$  \cite[Proposition  2.1]{Yuz95}. Each rank-two flat $X$ of cardinality $|X| \geq 3$ in $\uF$ gives rise to a component $L_X$ of $\RR^1(A)$ of dimension $|X|-1$, called a {\em local component}, and defined by 
\[
\ell_X=\{ x \in H_0 \mid x_e=0 \ \text{for} \ e \not\in X\}.
\]
Non-local components of $\RR^1(A)$ arise from multinets supported on rank-three submatroids of $\uF$ \cite{FY07}. The smallest such multinet is supported on the six-point graphic matroid $\uF(K_4)$. We will see below that the linear isomorphism type of $\RR^1(A)$ is an invariant of the fundamental group $\pi_1(|\QQ|)$.

Let $A^{\leq 2}=A/\oplus_{p\geq 3} A^p$ denote the truncation of $A$ to degree two. Clearly $\RR^1(A) \cong \RR^1(A^{\leq 2})$. Moreover, 
\begin{lemma} The graded algebra $A^{\leq 2}$ is an invariant of $\pi_1(|\QQ|)$.
\label{lem:char}
\end{lemma}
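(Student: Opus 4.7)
The plan is to reconstruct the truncated algebra $A^{\leq 2} = A^0 \oplus A^1 \oplus A^2$ from $\pi := \pi_1(|\QQ|)$, exploiting the fact that $A$ is generated in degree one. Indeed, since $A = A_\Z \otimes \C$ is (by \cite{GR,BZ}) isomorphic to the Orlik--Solomon algebra of $\uF$, which is a quotient of an exterior algebra on degree-one generators, we have $A^0 = \C$ and $A^2 = A^1 \cdot A^1 = \operatorname{im}(\cup \colon A^1 \otimes A^1 \to H^2(|\QQ|;\C))$. It thus suffices to recover the vector space $A^1$ together with the cup product as functors of $\pi$.

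First, by universal coefficients and the Hurewicz theorem,
\[
A^1 = H^1(|\QQ|;\C) \cong \operatorname{Hom}_\Z(H_1(|\QQ|;\Z),\C) \cong \operatorname{Hom}_\Z(\pi^{ab},\C),
\]
which is manifestly a functor of $\pi$. For the cup product, I would invoke the classifying map $c \colon |\QQ| \to K(\pi,1)$ for the universal cover. Being a $\pi_1$-isomorphism, $c$ induces an isomorphism $c^* \colon H^1(\pi;\C) \to A^1$, and the Hopf surjection $H_2(|\QQ|;\Z) \twoheadrightarrow H_2(\pi;\Z)$ (from the low-degree terms of the Leray--Serre sequence for $c$), dualized with $\C$-coefficients, gives an injection $c^* \colon H^2(\pi;\C) \hookrightarrow H^2(|\QQ|;\C)$.

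By naturality of cup products one has $\operatorname{im}(\cup_{|\QQ|}) = c^*\bigl(\operatorname{im}(\cup_\pi)\bigr)$ inside $H^2(|\QQ|;\C)$. Combined with the injectivity of $c^*$ on $H^2$, this identifies $A^2 = \operatorname{im}(\cup_{|\QQ|})$ canonically with $\operatorname{im}(\cup_\pi) \subseteq H^2(\pi;\C)$, as a quotient of $A^1 \otimes A^1 \cong H^1(\pi;\C)^{\otimes 2}$ equipped with its induced bilinear cup product. Since $H^1(\pi;\C)$, $H^2(\pi;\C)$, and the cup product on group cohomology are each functors of $\pi$, so is the entire graded algebra $A^{\leq 2}$. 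The only non-routine ingredients are the degree-one generation of $A$ (a standard consequence of the OS presentation) and the Hopf-theoretic injectivity of $c^*$ on $H^2$; neither is expected to present a serious obstacle.
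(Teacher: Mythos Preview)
Your argument is correct and is essentially the content of the result the paper cites rather than proves: the paper simply invokes \cite{R2} and \cite[Proposition~1.6]{MatSuc1} to assert that $A^{\leq 2}$ is isomorphic, as a graded algebra, to the degree-two truncation $H^{\leq 2}(\pi;\C)$ of group cohomology, noting that this uses only freeness of $A_\Z$ and degree-one generation. Your Hopf/classifying-map argument is precisely the mechanism behind that citation: $c^*$ is an isomorphism on $H^1$ and injective on $H^2$, and since $A^2 = \operatorname{im}(\cup_{|\QQ|}) = c^*(\operatorname{im}(\cup_\pi))$ by degree-one generation, $c^*$ is also surjective on $H^2$, giving $A^{\leq 2}\cong H^{\leq 2}(\pi;\C)$ outright (a slightly stronger conclusion than you state). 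So the approaches coincide; you have unpacked what the paper leaves as a reference, and in fact your version makes clear that the freeness hypothesis is not needed once one works over $\C$, since $\operatorname{Ext}^1_\Z(-,\C)$ vanishes automatically.
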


\begin{proof}
By an argument of \cite{R2} as generalized in \cite[Proposition  1.6]{MatSuc1},  $A^{\leq 2}$ is isomorphic to the degree-two truncation of the cohomology of the group $\pi_1(|\SS|)$, with complex coefficients; this depends only on the fact that $A_\Z \cong H^*(|\QQ|,\Z)$ is a free \Z-module and is generated in degree one. Then $A^{\leq 2}$ is determined up to graded-algebra isomorphism by $\pi_1(|\QQ|)$. \end{proof}

\begin{corollary} The linear isomorphism type of $\RR^1(A)$ is determined by $\pi_1(|\QQ|)$.
\label{cor:inv}
\end{corollary}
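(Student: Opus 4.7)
The plan is to combine Lemma~\ref{lem:char} with the observation that $\RR^1(A)$ is defined purely in terms of the multiplication pairing $A^1 \times A^1 \to A^2$. First I would note that the defining condition $ab = 0$ for the nonzero element $b \in A^1 - \kk a$ only involves products of degree-one elements, so $\RR^1(A)$ depends only on the truncated algebra $A^{\leq 2}$; in particular the natural identification $A^1 = (A^{\leq 2})^1$ gives a set-theoretic equality $\RR^1(A) = \RR^1(A^{\leq 2})$, already recorded in the paragraph preceding Lemma~\ref{lem:char}.

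Next, suppose $A = A(\FFF)$ and $A' = A(\FFF')$ are the OS algebras associated to two oriented matroids with $\pi_1(|\QQ(\FFF)|) \cong \pi_1(|\QQ(\FFF')|)$. By Lemma~\ref{lem:char}, there is a graded $\C$-algebra isomorphism $\phi : A^{\leq 2} \to (A')^{\leq 2}$. Restricting $\phi$ to degree one yields a $\C$-linear isomorphism $\phi^1 : A^1 \to (A')^1$ whose behaviour on products is controlled by $\phi^2$: for $a,b \in A^1$ we have $\phi^1(a)\phi^1(b) = \phi^2(ab)$, and in particular $ab = 0 \iff \phi^1(a)\phi^1(b) = 0$.

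From this equivalence it follows that $\phi^1$ carries the locus $\{a \in A^1 \mid \exists\, b \in A^1 - \C a,\ ab = 0\}$ bijectively onto the corresponding locus in $(A')^1$, i.e. $\phi^1(\RR^1(A)) = \RR^1(A')$. Since $\phi^1$ is a linear isomorphism, this is precisely a linear isomorphism of the two resonance varieties, proving the corollary.

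I do not anticipate any real obstacle: once Lemma~\ref{lem:char} is in hand, the corollary is a formal consequence of the fact that $\RR^1$ is functorial under graded-algebra isomorphisms in degrees $\leq 2$. The only thing to be careful about is to use the \emph{graded} nature of $\phi$, so that $\phi^1$ is a genuine linear map of the ambient spaces $A^1$ and $(A')^1$ in which the resonance varieties live.
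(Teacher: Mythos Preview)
Your proposal is correct and follows exactly the approach the paper intends: the corollary is stated without proof, as an immediate consequence of Lemma~\ref{lem:char} together with the preceding observation that $\RR^1(A)=\RR^1(A^{\leq 2})$. You have simply spelled out the formal details of that implication.
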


The set $\CC=\CC_\FFF$ of components of $\RR^1(A)$ determines a configuration of linear spaces, consisting of the sums $L_S=\sum_{\ell \in S} \ell$ for $S \subseteq \CC$. The associated {\em polymatroid} is the dimension function $d_\CC \colon 2^\CC -\{\emptyset\} \to \Z_{\geq 0}$ defined by $d_\CC(S)=\dim(L_S)$. We say a subset $S$ of $\CC$ is {\em closed} if $d_\CC(S)<d_\CC(T)$ for all $T \supsetneq S$.  A linear isomorphism of $\RR^1(A)$ to $\RR^1(A')$ induces a bijection $\CC \to \CC'$ which preserves closed sets and commutes with $d_\CC$.

Define the {\em support} $\supp(S)$ of $S \subseteq \CC$ by 
\[
\supp(S)=\{e \in E \mid x_e \neq 0 \ \text{for some} \ x \in L_S\}.
\]
One can identify the submatroid $\supp(S)$ of \uF\ for small matroids and small sets $S$. If $|\supp(S)|\leq 9$ and $S$ contains a non-local component $\ell$, then $\supp(\ell)$ contains the six-point matroid with four three-point lines, or one of two nine-point matroids with nine three-point lines. This is easy to check using \cite{Fa97} and \cite{FY07}. One computes in any of these three cases that, if $S$ contains at least three elements, and $S$ is closed, then $|S|\geq 4$. 

The {\em rank-three whirl} is the matroid $\W^3$ on $\{1,2,3,4,5,6\}$ with lines $123$, $345$, and $156$.

\begin{lemma} Suppose $|E|\leq 9$, $S \subseteq \CC$ is closed with $|S|=3$, $d(\ell)=2$ for all $\ell \in S$, and $d(S)=5$. Then $\supp(S)$ is a six-point submatroid of \uF\ isomorphic to $\W^3$.
\label{lem:whirl}
\end{lemma}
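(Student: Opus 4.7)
The plan is to use the reduction established in the paragraph preceding the lemma, then classify the possible dimension patterns for three $3$-point lines, and finally invoke closedness to identify the supporting matroid as $\W^3$.

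First, since $|S|=3$ and $S$ is closed, the cited observation---``if $S$ contains a non-local component then $|S|\geq 4$''---forces every component of $S$ to be local. Hence each $\ell\in S$ has the form $\ell_{X_i}$ for a rank-two flat $X_i$ of $\uF$, and the hypothesis $d(\ell_i)=|X_i|-1=2$ gives $|X_i|=3$.

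Next, recall from \cite{LY00} that distinct components of $\RR^1(A)$ intersect trivially, so $\dim(\ell_i+\ell_j)=4$ for $i\neq j$. Up to relabelling, the three $3$-point lines $X_1,X_2,X_3$ admit five intersection patterns: pairwise disjoint; exactly one pair meeting; two pairs meeting at distinct points (``path''); all three through a common point (``star''); and each pair meeting in a distinct point (``triangle''). A short support calculation gives $\dim L_S=6$ in the first four cases. In the triangle case, $|\supp(S)|=|X_1\cup X_2\cup X_3|=6$, and writing $\{a\}=X_1\cap X_3$, $\{b\}=X_1\cap X_2$, $\{c\}=X_2\cap X_3$, the vector $e_a-e_c=(e_a-e_b)+(e_b-e_c)$ exhibits a one-dimensional overlap $(\ell_1+\ell_2)\cap\ell_3$, yielding $\dim L_S=5$. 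Hence the hypothesis $d(S)=5$ forces the triangle pattern and $|\supp(S)|=6$.

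Now, by \cite[Proposition 2.1]{Yuz95} every component of $\RR^1(A)$ lies in the diagonal hyperplane. The subspace
\[
H_0^{\supp(S)}:=\{x\in\C^E\mid \supp(x)\subseteq \supp(S),\ \textstyle\sum_e x_e=0\}
\]
contains $L_S$ and has dimension $|\supp(S)|-1=5=\dim L_S$, so $L_S=H_0^{\supp(S)}$. Consequently any other component $\ell'$ of $\RR^1(A)$ with $\supp(\ell')\subseteq\supp(S)$ already lies in $L_S$, giving $d(S\cup\{\ell'\})=d(S)$; this is forbidden by closedness. Hence $\uF|\supp(S)$ carries no $3$-point line beyond $X_1,X_2,X_3$, and no non-local component of $\RR^1(A)$ is supported on $\supp(S)$. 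A rank-$3$ simple matroid on six elements whose only $3$-point lines are three arranged in a triangle is, by definition, the rank-$3$ whirl $\W^3$, completing the identification.

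The main obstacle is the case analysis in the second step: ruling out the four non-triangle configurations as sources of $d(S)=5$ requires careful bookkeeping to confirm $(\ell_1+\ell_2)\cap\ell_3=0$ in each, since the pairwise trivial intersection alone is not enough to determine the triple sum. Once the triangle shape is pinned down, closedness, the dimension count, and the matroid characterization follow cleanly.
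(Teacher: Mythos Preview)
Your proof is correct and follows essentially the same route as the paper's: rule out non-local components via the preceding observation, reduce to the triangle configuration by checking $d(S)=6$ in all other intersection patterns, and identify the six-point support as $\W^3$. Your explicit use of closedness to exclude a fourth three-point line (hence $\uF(K_4)$) makes precise a step the paper's proof leaves implicit in its final sentence.
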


\begin{proof} If $|\supp(S)|\leq 5$ then $S$ contains no non-local components, and at most two local components. Thus $|\supp(S)|\geq 6$. By the preceding remarks $S$ contains only local components, which arise from three three-point lines in \uF. 
One checks that $d(S)=6$ unless $\supp(S)$ has six points. The only rank-three matroid on six points with three three-point lines, which is not $\uF(K_4)$, is $\W^3$.
\end{proof}

%

%
%
%
%
%

Now let \FFF\ be the oriented matroid of the non-Pappus arrangement of pseudo-lines \cite{Grun72}.  The underlying rank-three matroid \uF\ has nine points, which we identify with the numbers $1, \ldots, 9$, and eight nontrivial lines
\[
123, 157, 168, 247, 269, 348, 359, 456.
\] 
In any point configuration over a field with these nontrivial lines, $789$ is also a line by Pappus' Theorem.  Thus \uF, the non-Pappus matroid, is not realizable over any field. Let $\QQ=\QQ_\FFF$ be the associated tope-pairs poset. 

\begin{theorem} $\pi_1(|\QQ|)$ is not isomorphic to the fundamental group of the complement of any arrangement of linear hyperplanes in $\C^r$.
\end{theorem}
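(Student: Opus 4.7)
The plan is to show that any complex hyperplane arrangement $\A'$ with $\pi_1(M(\A'))\cong\pi_1(|\QQ|)$ forces, via the resonance invariant, its underlying matroid $\uF'$ to coincide with the non-representable matroid $\uF$---a contradiction. Suppose such an $\A'$ exists. By Lemma~\ref{lem:char} and Corollary~\ref{cor:inv} there is a linear isomorphism $\RR^1(A)\to\RR^1(A')$ whose induced bijection $\CC\to\CC'$ preserves the dimension function $d$ and the notion of closed set; in particular $|E'|=|E|=9$.

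First I would compute $\RR^1(A)$. The only rank-three submatroid of $\uF$ that could support a multinet is the Pappus configuration itself, which requires the absent line $789$, so $\RR^1(A)$ consists solely of the eight two-dimensional local components coming from the lines $L_1=123,\ldots,L_8=456$. A straightforward dimension calculation shows that a 3-subset $\{L_i,L_j,L_k\}$ has $d(S)=5$ precisely when the three lines pairwise meet in three distinct points (a rank-three whirl $\W^3$); such triples are automatically closed because the six-point support of a whirl admits no fourth line of $\uF$. There are exactly $12$ such whirl triples; moreover $L_1$ and $L_8$ each belong to six of them, while the remaining six lines each belong to four.

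Transferring across the bijection yields $12$ closed $3$-subsets of $\CC'$ of dimension $5$, each comprising three two-dimensional components. By the remark preceding Lemma~\ref{lem:whirl}, a closed subset of $\CC'$ of size $\geq 3$ containing a non-local component has size $\geq 4$; since every element of $\CC$ (hence of $\CC'$) appears in some whirl triple, every component of $\CC'$ is local, and Lemma~\ref{lem:whirl} shows its support is a six-point $\W^3$ submatroid of $\uF'$. I would then use the polymatroid data: pair-values of $d$ recover which pairs of three-point lines of $\uF'$ share a point; triple-values distinguish pencils ($d=6$) from whirls ($d=5$) and thereby identify which pairwise intersections coincide; the 6-versus-4 asymmetry singles out the two ``base'' lines. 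Piecing this together shows that $\uF'$, on its nine-element ground set, has precisely the eight three-point lines of the non-Pappus matroid and no other rank-$2$ flat of size $\geq 3$ (any such flat would contribute an extra local component to $\RR^1(A')$).

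The final step is geometric: realizing these eight collinearities in $\mathbb{P}^n$ forces the nine points into a common $\mathbb{P}^2$. Indeed, the existence of the three cross-intersection points $L_2\cap L_4$, $L_3\cap L_6$, $L_5\cap L_7$ requires three specific quadruples of the six ``base'' points to be coplanar, and these coplanarities together force $L_1$ and $L_8$ to share a plane, whence all nine points lie in it. Thus $\uF'$ has rank $3$, and with the eight non-Pappus lines it must equal $\uF$, contradicting the $\C$-representability of $\uF'$ via $\A'$ and the known non-representability of $\uF$. The main obstacle is the combinatorial-rigidity step of the third paragraph: extracting, from the abstract polymatroid $(\CC',d_{\CC'})$ alone, enough information to pin down the line-incidence structure of $\uF'$ uniquely as that of the non-Pappus matroid, rather than merely as some matroid sharing its whirl pattern.
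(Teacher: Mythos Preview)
Your overall strategy matches the paper's: use the resonance variety as a group invariant (via Corollary~\ref{cor:inv}), show all components of $\CC'$ are local, then reconstruct $\uF'$ as the non-Pappus matroid to reach a contradiction. The difference is in how you carry out the reconstruction.

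The paper proceeds constructively, one whirl at a time. It starts with the single whirl $\{\ell_{123},\ell_{157},\ell_{359}\}$, whose image in $\CC'$ has support a copy of $\W^3$ by Lemma~\ref{lem:whirl}; then it adds $\ell_{456}$ and uses the specific value $d(S\cup\{\ell_{456}\})=7$ together with the whirl on $245679$ to see that the eight-point support is $\uF\setminus\{8\}$; finally the whirls on $145678$ and $123478$ locate the ninth point. Each step uses one concrete polymatroid value to pin down how the new line meets what has already been built, so no global rigidity statement is needed. Your approach instead extracts the incidence structure from aggregate statistics (the twelve whirl triples and the $6$-versus-$4$ degree asymmetry). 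This is a legitimate alternative, and your numerics are correct, but as you yourself note, the passage from ``$\uF'$ has the same whirl-incidence pattern as $\uF$'' to ``$\uF'$ has exactly the eight non-Pappus lines'' needs a genuine rigidity argument that you have not supplied. The paper's stepwise method sidesteps this entirely.

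Your separate geometric argument for rank three, arguing coplanarities in $\mathbb{P}^n$, is also unnecessary and somewhat muddled (you are reasoning about the realization of $\uF'$ rather than $\uF'$ itself). Lemma~\ref{lem:whirl} already says each whirl support is a rank-three \emph{submatroid} of $\uF'$. Two such submatroids sharing four points in general position (e.g.\ $\{1,2,3,5,7,9\}$ and $\{2,4,5,6,7,9\}$ share $\{2,5,7,9\}$, which has rank three since no three are collinear) must span the same rank-three flat; iterating over the whirls used in the paper's argument forces all nine points into one rank-three flat, hence $\uF'$ has rank three. This is implicit in the paper's ``completes a copy of $\uF$.''
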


\begin{proof} Suppose \A\ is an arrangement of linear hyperplanes in $\C^r$, and $\pi_1(|\QQ|)$ is isomorphic to the fundamental group of the complement of \A. Let $\uF'$ denote the underlying matroid, and $A'$ the Orlik-Solomon algebra of \A. By Corollary~\ref{cor:inv} there is a linear isomorphism from $A^1$ to $(A')^1$ carrying $\RR^1(A)$ to $\RR^1(A')$. Then $\uF'$ is a matroid on nine points. We claim $\uF'$ is isomorphic to \uF, hence is not realizable over \C, a contradiction.

To prove the claim, we calculate that the set \CC\ of components of $\RR^1(A)$ consists of eight local components, of dimension two, coming from the eight three-point lines. Then the set $\CC'$ of components of $\RR^1(A')$ also consists of eight two-dimensional subspaces. There is no $S \subseteq \CC$ with $|S|=5$ and $d(S)=5$. Then the same holds for $\CC'$.  Together with the fact that $|\CC'|<9$, this implies $\CC'$ contains no non-local components.

The submatroid of \uF\ on $\{1,2,3,5,7,9\}$ is the rank-three whirl, yielding the set $S=\{\ell_{123},\ell_{157},\ell_{359}\}\subseteq \CC$ with $d(S)=5$. These three components map to three components of $\RR^1(A')$ whose support $\W$ in $\uF'$ is also the rank-three whirl, by Lemma~\ref{lem:whirl}. The image of $\ell_{456}$ is a local component, hence is supported on a three-point line. This line in $\uF'$ meets $\W$ in one point, because $d(S \cup \{\ell_{456}\})=7$. Since $245679$ is also a whirl, one concludes that the eight points in the support of the image of $\{\ell_{123},\ell_{157},\ell_{359},\ell_{456}\}$ form a copy of $\uF-\{8\}$. Finally, since $145678$ and $123478$ are also whirls, the support of the image of $\ell_{168}$ yields the ninth point in $\uF'$, and completes a copy of \uF.
\end{proof}

\begin{ack} This work was mostly done while the second author was
  visiting the institute for Algebra, Geometry, Topology, and their
  Applications (ALTA) at the University of Bremen, and the first author was
  a postdoc there. We gratefully acknowledge the support of the Institute and University, and thank Eva-Maria Feichtner and Dmitry Feichtner-Kozlov for their support and hospitality, and for many useful discussions.
\end{ack}
\bibliographystyle{abbrv}
\bibliography{circlebib}
\end{document}